\newcommand{\Span}{\operatorname{Span}}
 \newcommand{\supp}{\operatorname{supp}}
   \theoremstyle{plain}%default
   \newtheorem{thm}{Theorem}[section]
   \newtheorem{prop}[thm]{Proposition}
   \newtheorem{lemma}[thm]{Lemma}  
   \newtheorem{cor}[thm]{Corollary}
   \theoremstyle{definition}
   \newtheorem{defn}[thm]{Definition}
   \theoremstyle{remark}
\definecolor{mybgcolor}{gray}{0.8}
\definecolor{myframecolor}{rgb}{.647,.129,.149}
\newmdenv[style=mystyle]{important}
   \numberwithin{equation}{section}
        \date{\today}
\title[Random walks on groups and KMS states]{Random walks on groups and KMS states}
\author{Johannes Christensen and Klaus Thomsen}
\date{\today}
\email{ johannes@math.au.dk, matkt@math.au.dk}
\address{Department of Mathematics, Aarhus University, Ny Munkegade, 8000 Aarhus C, Denmark}
\begin{document}

\maketitle

\begin{abstract} A classical construction associates to a transient random walk on a discrete group $\Gamma$ a compact $\Gamma$-space $\partial_M \Gamma$ known as the Martin boundary. The resulting crossed product $C^*$-algebra $C(\partial_M \Gamma) \rtimes_r \Gamma$ comes equipped with a one-parameter group of automorphisms given by the Martin kernels that define the Martin boundary. In this paper we study the KMS states for this flow and obtain a complete description when the Poisson boundary of the random walk is trivial and when $\Gamma$ is a torsion free non-elementary hyperbolic group. We also construct examples to show that the structure of the KMS states can be more complicated beyond these cases.
 \end{abstract}

\section{Introduction} The purpose of this paper is to show that random walks on groups in a natural way gives rise to $C^*$-algebras equipped with a one-parameter flow of automorphisms and hence to possible models in quantum statistical mechanics, \cite{BR}, and to initiate the study of the equilibrium states in such models. Our approach is based on results and examples from the research on random walks.

The data underlying a random walk on a discrete countable group $\Gamma$ is a map 
$$
\mu : \Gamma \to [0,1]
$$
 such that $\sum_{g \in \Gamma} \mu(g) = 1$ and the support of $\mu$ generates $\Gamma$ as a semigroup. In the random walk governed by $(\Gamma,\mu)$ the value $\mu(g^{-1}h)$ is the probability of going from $g \in \Gamma$ to $h \in \Gamma$. For transient random walks, which are those that almost certainly diverge to infinity, there is a compactification of the group whose boundary $\partial_M\Gamma$ is called the Martin boundary and which is used for the study of the $\mu$-harmonic functions on $\Gamma$. There is an action of $\Gamma$ on the Martin boundary which extends the left action of $\Gamma$ on itself and we can therefore introduce the full and reduced crossed product $C^*$-algebras $C(\partial_M \Gamma) \rtimes \Gamma$ and $C(\partial_M \Gamma) \rtimes_r \Gamma$. The Martin boundary is constructed from the Martin kernel $K : \Gamma \times \Gamma \to \mathbb R$ which is defined such that
$$
K(g,h) = \frac{F(g,h)}{F(e,h)}
$$
where $e$ is the neutral element and $F(g,h)$ is the probability of visiting $h$ when starting at $g$. The Martin compactification is the smallest compactification of $\Gamma$ with the property that the function $h \mapsto K(g,h)$ extends to a continuous function on the compactification for every $g\in \Gamma$. The Martin boundary $\partial_M \Gamma$ is the complement of $\Gamma$ in this compactification and the Martin kernel extends by continuity to a function
$$
K : \Gamma \times \partial_M \Gamma \ \to \ \mathbb R \ .
$$
By definition the probability of going in one step from $x$ to $y$ in $\Gamma$ is the same as that of going from $gx$ to $gy$ for each $g \in \Gamma$ and it follows therefore that $F(gx,gy) = F(x,y)$. A straightforward consequence of this and the definiton of $K$ is  the cocycle relation 
\begin{equation*}\label{04-06-19}
K(g^{-1},hy)K(h^{-1},y) \ = \ K(h^{-1}g^{-1},y) \ ,
\end{equation*}
valid for all $g,h,y \in \Gamma$, and by continuity also when $y\in \partial_M \Gamma$. This cocycle relation is exactly what is needed to ensure that we can define a flow $\alpha^{\mu}_t, \ t \in \mathbb R$, of automorphisms both on $C(\partial_M\Gamma) \rtimes \Gamma$ and on $C(\partial_M\Gamma) \rtimes_r \Gamma$ such that
$$
\alpha^{\mu}_t(fU_g) \ = \ fU_gK({g}^{-1}, \ \cdot)^{-it}  \ ,
$$
when $f \in C(\partial_M \Gamma)$ and $U$ is the canonical unitary representation of $\Gamma$ in the crossed product(s). 

In this way the Martin boundary not only comes equipped with an action of $\Gamma$ so that we can consider the associated crossed product $C^*$-algebras $C(\partial_M\Gamma) \rtimes \Gamma$ and $C(\partial_M\Gamma) \rtimes_r \Gamma$ but also with a cocycle arising naturally from the ingredients of its construction which defines flows on these algebras. We are here only interested in the reduced crossed product, but we carry the full crossed product along in order to be able to use Neshveyev's results, \cite{N}.

Instead of plunging directly to random walks we introduce the flows we shall consider in a more general setting where it becomes apparent from the work of Rieffel \cite{Ri} that they can be considered as examples of the geodesic flows which were introduced by Connes in \cite{C}. It follows that in terms of noncommutative geometry, what we establish here is a relation between the stationary measures of a random walk and the equilibrium states of the geodesic flow on the cosphere algebra of a spectral triple which can be defined from the random walk by the methods developed by Rieffel.

Let us describe our results in the case where the random walk has finite support. It turns out that the set of possible inverse temperatures, by which we here mean the set of non-zero real numbers $\beta$ for which there exists a $\beta$-KMS state for the flow $\alpha^{\mu}$, only depends on whether or not the Martin boundary contains what we call a spine; an element $\xi_0 \in \partial_M \Gamma$ such that $K(g,\xi_0) = 1$ for all $g \in \Gamma$. If it does,  the set of non-zero inverse temperatures is $\mathbb R \backslash \{0\}$, and if it does not the set consists only of the number $1$. If there is a spine and $\beta \notin \{0,1\}$, the simplex of $\beta$-KMS states is affinely homeomorphic to the tracial state space of $C^*_r(\Gamma)$. So what remains to be figured out is what the simplex of $1$-KMS states is, both when there is a spine and when there is not. It is always non-empty and we show that it only contains one element when the Poisson boundary of the random walk is trivial and when $\Gamma$ is a torsion free non-elementary hyperbolic group. But we also show by examples that it can contain arbitrarily many extreme points. It appears that the structure of the $1$-KMS states of $\alpha^{\mu}$ is very case-sensitive and at present it seems hopelessly difficult to find a description of it in the general case.

The paper is aimed primarily at operator algebraists, but we hope that experts in random walks on groups will find the examples constructed in the last section of interest. Also it would be nice if the paper could trigger research into the question about existence of a spine in the Martin boundary because our results show that the structure of KMS states and the possible inverse temperatures is governed by this. An intriguing observation in this respect is that there does not seem to be any random walk on an amenable group without a spine in the Martin boundary and also no example, as far as we can tell, of a random walk on a non-amenable group for which there is a spine in the Martin boundary.

%The results establish a close link between the stationary measures of a random walk and the KMS states of $\alpha^{\mu}$. For certain random walks on free groups and on free amalgamations of finite groups the relation was investigated by Okayasu in \cite{O} and \cite{O2}. It is intriguing to observe that there is also a connection to the non-commutative geometry of Alain Connes. In \cite{Ri} Marc Rieffel shows how to associate a spectral triple to a translation bounded function on $\Gamma$ and he shows that the cosphere algebra which Connes in \cite{C} associates to a spectral triple, in this setting can be realized as a crossed product. Connes considers a flow on the cosphere algebra which he calls the geodesic flow, and by describing the crossed products arising from a random walk on $\Gamma$ as we shall do it below, an easy comparison with Rieffels work shows that the flow we consider here is the geodesic flow on the cosphere algebra of the spectral triple arising from a translation bounded function which is naturally associated with the random walk, at least whenever the action by $\Gamma$ on $\partial_M \Gamma$ is amenable. 

%In terms of new methods and techniques the contributions of the present paper are modest. The main reason for its existence is that we want to demonstrate the close relationship between the fundamental measures associated to a random walk and the KMS states for the flows that appear naturally from the construction of the Martin boundary, in the hope that our work may contribute to a closer connection between operator algebras and random walks.

\bigskip

\emph{Acknowledgement} The work was supported by the DFF-Research Project 2 `Automorphisms and Invariants of Operator Algebras', no. 7014-00145B.

\section{Generalized gauge actions on crossed products} Let $X$ be a compact metric space, $\Gamma$ a countable discrete group and $\Gamma \ni g \mapsto \phi_g$ a representation of $\Gamma$ by homeomorphisms of $X$; i.e. $\phi_g : X \to X$ is a homeomorphism and $\phi_{gh} = \phi_g \circ \phi_h$ for all $g,h \in \Gamma$. There is then an action $\gamma = \left(\gamma_g\right)_{g \in \Gamma}$ of $\Gamma$ by automorphisms of $C(X)$ defined such that
$$
\gamma_g(f) = f \circ \phi_g^{-1} = f \circ \phi_{g^{-1}} \ 
$$
for all $f \in C(X)$. We shall be concerned with the crossed product $C^*$-algebras of the pair $(X,\phi)$ and mainly the reduced crossed product $C(X)\rtimes_r \Gamma$, cf. \cite{Pe}. But the full crossed product $C(X)\rtimes \Gamma$ will also play a role. A map $D : \Gamma \to C(X)$ is a cocycle or a $\phi$-cocycle on $X$ when $D_g \in C(X)$ is real-valued and
\begin{equation*}\label{cocycle}
 D_g \circ \phi_h \ + \ D_h \ = \ D_{gh}
\end{equation*}
for all $g,h \in  \Gamma$. Let $\{U_g\}_{g \in \Gamma}$ denote the canonical unitary representation of $\Gamma$, both in $C(X)\rtimes_{r} \Gamma$ and in $C(X)\rtimes \Gamma$. In particular, $U_{g}fU_{g}^{*}=\gamma_{g}(f)$ for all $f\in C(X)$ and $g\in \Gamma$. The defining properties of a cocycle ensures that there are continuous flows $\alpha^D$ on $C(X)\rtimes_{r} \Gamma$ and $C(X)\rtimes \Gamma$ defined such that
$$
\alpha^D_t(fU_g) = fU_ge^{-it D_g} \ 
$$
for all $t\in \mathbb R$, all $g\in \Gamma$ and all $f \in C(X)$. The canonical surjection $C(X)\rtimes \Gamma \to C(X)\rtimes_r \Gamma$ is then equivariant. In the following we will simultaneously consider $(C(X)\rtimes_r\Gamma, \alpha^D)$ and $(C(X)\rtimes \Gamma, \alpha^D)$ and only specify which when it is necessary.

\subsection{KMS states and conformal measures}
Let $\beta \in \mathbb R$. A state $\omega$ on $C(X)\rtimes_{r} \Gamma$, resp. $C(X)\rtimes \Gamma$, is a $\beta$-KMS state for $\alpha^D$ when 
$$
\omega(ab) \ = \ \omega(b\alpha^D_{i\beta}(a))
$$
for all $\alpha^D$-analytic elements $a,b$ in $C(X)\rtimes_{r} \Gamma$, resp. $C(X)\rtimes \Gamma$, \cite{BR}. The values of $\beta$ for which there is a $\beta$-KMS state will be called the KMS spectrum of $\alpha^D$. In both algebras the subspace
$$
\Span \left\{ f U_g: \ g \in \Gamma, \ f \in C(X) \right\}
$$
is a dense $\alpha^{D}$-invariant $*$-subalgebra consisting of $\alpha^D$-analytic elements, and hence a state $\omega$ is a $\beta$-KMS state for $\alpha^D$ iff
$$
\omega\left( f_1U_{g_1}f_2U_{g_2}\right) \ = \ \omega(f_2U_{g_2}\alpha^D_{i\beta}(f_1U_{g_1}) \ = \  \omega(f_2U_{g_2}f_1U_{g_1}e^{\beta D_{g_1}})
$$
for all $f_1,f_2 \in C(X), \ g_1,g_2 \in \Gamma$. Let $m$ be a Borel probability measure on $X$. For $\beta \in \mathbb R$ we say that $m$ is $e^{\beta D}$-conformal for $\phi$ when
\begin{equation}\label{14-03-19a}
m\left(\phi_g(B)\right) = \int_B e^{\beta D_g} \ \mathrm{d} m
\end{equation}
for all Borel subsets $B \subseteq X$ and all $g \in \Gamma$. In terms of Radon-Nikodym derivatives the condition is that the measure $m \circ \phi_g$, defined such that $m \circ \phi_g(B) = m\left(\phi_g(B)\right)$, is absolutely continuous with respect to $m$ with Radon-Nikodym derivative
\begin{equation*}\label{21-03-19a}
\frac{\mathrm{d} m \circ \phi_g}{\mathrm{d} m} \ = \ e^{\beta D_g}
\end{equation*}
for all $g \in \Gamma$. Alternatively, as is easily verified, $m$ is $e^{\beta D}$-conformal if and only if
\begin{equation}\label{20-12-18fx}
\int_X f \ \mathrm{d} m = \int_X f\circ \phi_g \ e^{\beta D_g} \ \mathrm{d} m 
\end{equation}
for all $f \in C(X)$ and all $g \in \Gamma$. In particular, an $e^{\beta D}$-conformal measure $m$ satisfies that
\begin{equation}\label{21-01-20b}
\int_X e^{\beta D_g} \ \mathrm{d}m \ = \ 1 \ 
\end{equation} 
for all $g \in \Gamma$.

When $\omega$ is {\color{red}a} $\beta$-KMS state for $\alpha^D$ we find that
\begin{equation*}
\begin{split}
&\omega(f) = \omega(U_g(f\circ \phi_g) U_g^*) = \omega((f\circ \phi_g) U_g^*\alpha^D_{i\beta}(U_g))\\
& = \omega((f\circ \phi_g) U_g^*U_ge^{\beta D_g}) = \omega((f\circ \phi_g) e^{\beta D_g}) 
\end{split}
\end{equation*}
for all $f \in C(X)$ and $g \in \Gamma$. By comparing with \eqref{20-12-18fx} we see that the restriction of $\omega$ to $C(X)$ gives rise, via the Riesz representation theorem, to an $e^{\beta D}$-conformal measure on $X$. In particular, there are no $\beta$-KMS states for $\alpha^D$ unless there is an $e^{\beta D}$-conformal measure for $\phi$.

To go in the other direction, from measures on $X$ to states on $C(X)\rtimes_{r} \Gamma$, let $Q : C(X)\rtimes_{r} \Gamma \to C(X)$ be the canonical conditional expectation. Composed with the map $C(X)\rtimes \Gamma \to C(X) \rtimes_r \Gamma$ we can also consider $Q$ as a conditional expectation on $C(X)\rtimes \Gamma$. Then any Borel probability measure $m$ on $X$ gives rise to a state $\omega_m$ on both $C(X)\rtimes_{r} \Gamma$ and $C(X)\rtimes \Gamma$ defined such that
\begin{equation}\label{23-01-19cx}
\omega_m (a) \ = \ \int_X Q(a) \ \mathrm{d} m \ .
\end{equation}

\begin{lemma}\label{23-01-19ax} A Borel probability measure $m$ on $X$ is an $e^{\beta D}$-conformal measure iff the state $\omega_m$ defined by \eqref{23-01-19cx} is a $\beta$-KMS state.
\end{lemma}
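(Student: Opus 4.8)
The plan is to verify the KMS criterion on the dense $*$-subalgebra $\Span\{fU_g : g \in \Gamma,\ f \in C(X)\}$ discussed above, pushing everything down to integrals against $m$ by means of the conditional expectation $Q$. The \emph{only if} direction is almost immediate from the computation preceding the statement: if $\omega_m$ is a $\beta$-KMS state, then the restriction of $\omega_m$ to $C(X)$ is $e^{\beta D}$-conformal; but $Q$ restricts to the identity on $C(X)$, so $\omega_m(f) = \int_X Q(f)\,\mathrm{d}m = \int_X f\,\mathrm{d}m$, which means that this restriction \emph{is} $m$. Hence $m$ is $e^{\beta D}$-conformal, and the substance lies in the converse.

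For the \emph{if} direction I would assume that $m$ is $e^{\beta D}$-conformal and test the equivalent KMS identity $\omega_m(f_1U_{g_1}f_2U_{g_2}) = \omega_m(f_2U_{g_2}f_1U_{g_1}e^{\beta D_{g_1}})$ on arbitrary generators. Using $U_gf = (f\circ\phi_{g^{-1}})U_g$ (equivalently $U_gfU_g^* = \gamma_g(f) = f\circ\phi_{g^{-1}}$) I would first bring both products into the normal form (function)$\,\cdot\,U_{(\text{group element})}$:
$$
f_1U_{g_1}f_2U_{g_2} = f_1\,(f_2\circ\phi_{g_1^{-1}})\,U_{g_1g_2},
$$
$$
f_2U_{g_2}f_1U_{g_1}e^{\beta D_{g_1}} = f_2\,(f_1\circ\phi_{g_2^{-1}})\,(e^{\beta D_{g_1}}\circ\phi_{(g_2g_1)^{-1}})\,U_{g_2g_1}.
$$
Since $Q$ annihilates $fU_g$ for $g\neq e$ and returns $f$ when $g=e$, both $\omega_m$-values vanish unless $g_1g_2 = e$, i.e. $g_2 = g_1^{-1}$; in particular the two sides vanish simultaneously. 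Writing $g = g_1$, in the surviving case the identity to be proved reduces, after using $\phi_{(g_2g_1)^{-1}} = \id$ and $g_2^{-1} = g$, to
$$
\int_X f_1\,(f_2\circ\phi_{g^{-1}})\,\mathrm{d}m = \int_X f_2\,(f_1\circ\phi_g)\,e^{\beta D_g}\,\mathrm{d}m .
$$

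Finally I would deduce this integral identity from conformality in the form \eqref{20-12-18fx}. Applying \eqref{20-12-18fx} to the function $f = f_1\,(f_2\circ\phi_{g^{-1}})$ and simplifying $f\circ\phi_g = (f_1\circ\phi_g)\,(f_2\circ\phi_{g^{-1}}\circ\phi_g) = (f_1\circ\phi_g)\,f_2$ yields exactly the right-hand side, which finishes the argument. I would also note that the computation is reversible: specializing $f_2 = 1$ recovers \eqref{20-12-18fx}, so the chain of reductions actually produces the stated equivalence directly, independently of the Riesz-representation remark. There is no deep obstacle here; the only point demanding care is the bookkeeping of inverses and of the placement of the cocycle factor $e^{\beta D_{g_1}}$ — one must check that it ends up on the correct side and that the two products $g_1g_2$ and $g_2g_1$ impose the same support condition, so that the vanishing of the two sides is genuinely synchronized.
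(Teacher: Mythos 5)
Your proposal is correct and follows essentially the same route as the paper: the direction ``$\omega_m$ KMS $\Rightarrow$ $m$ conformal'' is read off from the computation preceding the lemma, and the converse is checked on generators $f_1U_{g_1}f_2U_{g_2}$ by noting that $Q$ kills both sides unless $g_2=g_1^{-1}$ and then reducing the surviving case to \eqref{20-12-18fx}. The bookkeeping of the normal forms and of the cocycle factor is carried out correctly, so nothing further is needed.
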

\begin{proof} The 'if' part follows from the preceding, so assume that $m$ is $e^{\beta D}$-conformal. It suffices to show that
$$
\omega_m(fU_ghU_k) = \omega_m\left(hU_kfU_ge^{\beta D_g}\right)
$$
when $f,h \in C(X)$ and $g,k \in \Gamma$. Since
$$
Q \left(f U_g hU_k\right) \ = \ Q\left(hU_kfU_ge^{\beta D_g}\right)  \ = \ 0
$$
when $k \neq g^{-1}$, the equality holds trivially in this case. It suffices therefore to consider the case $k = g^{-1}$ where we get
\begin{equation*}
\begin{split}
&\omega_m\left( fU_g h U_{g^{-1}}\right) = \omega_m\left( f h \circ \phi_g^{-1}\right)   =\omega_m\left(  (f \circ \phi_g) h  e^{\beta D_g}\right) \\ &  = \omega_m\left(h f \circ  \phi_g e^{\beta D_g}\right)  =  \omega_m\left( h U_{g^{-1}}fU_g e^{\beta D_g}\right) \ ,
\end{split}
\end{equation*}
which is the required equation.
\end{proof}

Thus the map from $\beta$-KMS states for $\alpha^D$ to $e^{\beta D}$-conformal measures for $\phi$, given by restriction of states to $C(X)$, is an affine surjection. It is not hard to see that the map is injective and hence an affine homeomorphism when $\phi$ is a free action, but in general it is not. %When the action $\phi$ is free, in the sense that $\phi_g(x) = x \Rightarrow g =e$, the map is also injective. This follows from the following.

We note that ergodicity of an $e^{\beta D}$-conformal measure is equivalent to the extremality in the compact convex set of $e^{\beta D}$-conformal measures as well as to an extremality condition on the corresponding $\beta$-KMS state. We leave the proof to the reader.

\begin{thm}\label{11-03-20} Let $m$ be a $e^{\beta D}$-conformal measure. The following are equivalent:
\begin{itemize}
\item $m$ is ergodic for $\phi$.
\item $m$ is extremal in the compact convex set of $e^{\beta D}$-conformal measures.
\item The set of $\beta$-KMS-states $\nu$ for $\alpha^D$ for which
$$
\nu(f) = \int_X f \ \mathrm{d} m \ \ \forall f \in C(X) \ 
$$ is a closed face in the simplex of $\beta$-KMS states for $\alpha^D$.
\end{itemize}
\end{thm}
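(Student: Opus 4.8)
The plan is to establish the two equivalences separately: the equivalence of the first two bullets is a measure-theoretic statement about $e^{\beta D}$-conformal measures, while the equivalence of the second and third bullets is a soft fact about affine surjections between compact convex sets. Throughout I write $M_\beta$ for the set of $e^{\beta D}$-conformal probability measures and $K_\beta$ for the set of $\beta$-KMS states for $\alpha^D$; both are weak*-compact and convex, since \eqref{20-12-18fx}, respectively the KMS condition, is preserved under convex combinations and weak*-limits. I will also use that \eqref{20-12-18fx} extends from $f \in C(X)$ to all bounded Borel functions, which is immediate from the Radon--Nikodym characterization \eqref{14-03-19a}.

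For ergodicity $\Leftrightarrow$ extremality, suppose first that $m$ is ergodic and that $m = t m_1 + (1-t) m_2$ with $m_1, m_2 \in M_\beta$ and $0 < t < 1$. Then $t m_1 \le m$, so $m_1 = h\, m$ for some Borel $h$ with $0 \le h \le t^{-1}$. Since $\int_X fh\,\mathrm{d}m = \int_X f\,\mathrm{d}m_1$, the right-hand sides of \eqref{20-12-18fx} --- for $m$ applied to $fh$, and for $m_1$ applied to $f$ --- coincide, which after subtraction yields
\begin{equation*}
\int_X (f\circ\phi_g)\,(h\circ\phi_g - h)\, e^{\beta D_g}\, \mathrm{d}m \ = \ 0
\end{equation*}
for all $f \in C(X)$ and all $g \in \Gamma$. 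As $f\circ\phi_g$ exhausts $C(X)$ and $e^{\beta D_g} > 0$, this forces $h\circ\phi_g = h$ $m$-almost everywhere for each $g$; since $\Gamma$ is countable, $h$ is $\phi$-invariant modulo $m$-null sets, hence $m$-essentially constant by ergodicity, and $\int_X h\,\mathrm{d}m = 1$ gives $h = 1$, i.e. $m_1 = m$. Thus $m$ is extremal. Conversely, if $m$ is not ergodic, pick a $\phi$-invariant Borel set $B$ with $0 < m(B) < 1$; using $1_B\circ\phi_g = 1_B$ $m$-a.e. together with \eqref{20-12-18fx} one checks that $m(B)^{-1}\,1_B\, m$ and $m(B^{c})^{-1}\,1_{B^{c}}\, m$ both lie in $M_\beta$, so $m = m(B)\,[m(B)^{-1}1_B m] + m(B^{c})\,[m(B^{c})^{-1}1_{B^{c}} m]$ is a nontrivial convex decomposition and $m$ is not extremal.

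For extremality $\Leftrightarrow$ the face condition, consider the restriction map $r : K_\beta \to M_\beta$, $r(\nu) = \nu|_{C(X)}$, which by the computation preceding \reflemma{23-01-19ax} is well defined, weak*-continuous and affine, and which is surjective because $r(\omega_m) = m$ by \reflemma{23-01-19ax}. The set in the third bullet is exactly the fiber $r^{-1}(m)$, automatically closed. It therefore suffices to prove the general statement that, for a continuous affine surjection $r$ between compact convex sets, $r^{-1}(x)$ is a face if and only if $x$ is extreme. If $x$ is extreme and $\nu = t\nu_1 + (1-t)\nu_2 \in r^{-1}(x)$ with $0 < t < 1$, then $x = t\, r(\nu_1) + (1-t)\, r(\nu_2)$ forces $r(\nu_1) = r(\nu_2) = x$, so $r^{-1}(x)$ is a face. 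If $x$ is not extreme, write $x = s x_1 + (1-s) x_2$ with $x_1 \neq x_2$ and, say, $x_1 \neq x$; lifting to $\nu_1 \in r^{-1}(x_1)$, $\nu_2 \in r^{-1}(x_2)$, the point $s\nu_1 + (1-s)\nu_2$ lies in $r^{-1}(x)$ while $\nu_1$ does not, so $r^{-1}(x)$ is not a face. Taking $x = m$ completes the chain of equivalences.

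The second equivalence is purely formal, so the care is concentrated in the first. The one genuine point is to use conformality in the functional form \eqref{20-12-18fx} (extended to bounded Borel functions via \eqref{14-03-19a}) rather than \eqref{14-03-19a} directly, since this is what lets one transport $h\circ\phi_g$ against $f\circ\phi_g$ and read off the invariance of $h$. One should also fix the correct notion of ergodicity for the quasi-invariant action --- invariant sets modulo $m$-null sets have measure $0$ or $1$ --- which is legitimate precisely because the strict positivity of $e^{\beta D_g}$ makes $m$ genuinely quasi-invariant under $\phi$; passing from an invariant-mod-null set to a strictly invariant representative in the non-ergodic direction is routine for countable $\Gamma$.
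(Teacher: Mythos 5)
Your proof is correct and complete. The paper itself gives no argument here (it explicitly leaves the proof to the reader), and what you have written is the standard argument the authors clearly intend: the Radon--Nikodym comparison of a dominated conformal measure against $m$ to get ergodic implies extremal, the restriction-to-an-invariant-set construction for the converse, and the purely formal fact that fibers of a continuous affine surjection of compact convex sets are faces exactly over extreme points, using Lemma~\ref{23-01-19ax} for surjectivity of the restriction map.
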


%\begin{color}{red}
%\begin{proof} Assume that the third bullet holds. If $m_1, m_2$  are $e^{\beta D}$-conformal and $t \in ]0,1[$ is such that $tm_1 +(1-t)m_2 = m$, it follows that $\omega_m = t\omega_{m_1} + (1-t)\omega_{m_2}$ and hence $\left(t \omega_{m_1} + (1-t)\omega_{m_2}\right)(f) = \int_X f \ \mathrm{d} m$ for all $f \in C(X)$. It follows from the third bullet that $\omega_{m_1}(f) = \omega_{m_2}(f) = \int_X f \ \mathrm{d} m$ for all $f \in C(X)$ and hence that $m_1 = m_2 = m$. Thus the second bullet holds. Assume then that the second bullet holds, and let $\nu_1,\nu_2$ be $\beta$-KMS states for $\alpha^D$ and $t \in ]0,1[$ such that
%$$
%t\nu_1(f) + (1-t)\nu_2(f) = \int_X f \ \mathrm{d} m
%$$
%for all $f \in C(X)$. Since $\nu_i|_{C(X)}$ define $e^{\beta D}$-conformal measures it follows from the second bullet that $\nu_1(f) = \nu_2(f) = \int_X f \ \mathrm{d} m$ for all $f \in C(X)$. This shows that the third bullet holds.
%\end{proof}
%\end{color}
\section{Flows from group compactifications}\label{bussec} 

Let $\Gamma$ be a countable discrete group. Following \cite{Ri} we say that a function $\omega: \Gamma  \to \mathbb R$ is translation bounded when
\begin{equation}\label{25-03-19a}
\sup_{h \in \Gamma}  \ \left|\omega(gh) \ - \ \omega(h)\right| \ < \ \infty \  \ \ \forall g \in \Gamma  \ .
\end{equation}
Starting from a translation bounded function Rieffel gave in \cite{Ri} a construction which we now recall. Let $\mathcal B(\Gamma)$ denote the $C^*$-algebra of bounded functions on $\Gamma$ and let $1_g \in \mathcal B(\Gamma)$ be the characteristic function of the set $\{g\}$. We let $\mathcal B_{\omega}$ be the $C^*$-subalgebra of $\mathcal B(\Gamma)$ generated by the constant functions, the functions $1_g,  g \in \Gamma$, and the functions $\Delta_g$, $g \in \Gamma$, where 
\begin{equation}\label{28-05-19d}
\Delta_g(h) = \omega(gh) - \omega(h) \ . 
\end{equation}
$\mathcal B_{\omega}$ is a separable unital and abelian $C^*$-algebra and we denote its space of characters by $\overline{\Gamma}^{\omega}$. By Gelfand's theorem we can identify $\mathcal B_{\omega}$ with $C\left(\overline{\Gamma}^{\omega}\right)$. Every group element $g \in \Gamma$ defines a character $c_g$ on $\mathcal B_{\omega}$ such that $c_g(f) = f(g)$, giving rise to an embedding $\Gamma \subseteq \overline{\Gamma}^{\omega}$ such that $\Gamma$ is dense, open and discrete in $\overline{\Gamma}^{\omega}$. Thus $\overline{\Gamma}^{\omega}$ is a compactification $\Gamma$, called the $\omega$-compactification by Rieffel in \cite{Ri}.
The complement 
$$
\partial_{\omega} \Gamma \ = \ \overline{\Gamma}^{\omega} \backslash \Gamma
$$
is a compact metric space and following Rieffel we call it the $\omega$-boundary of $\Gamma$. As a subset of $\overline{\Gamma}^{\omega}$ the elements of $\partial_{\omega} \Gamma$ are the characters $\kappa$ on $\mathcal B_{\omega}$ that annihilate all $1_g$, i.e.
\begin{equation*}\label{27-03-19}
\partial_{\omega} \Gamma \ = \ \left\{ \kappa \in \overline{\Gamma}^{\omega}: \ \kappa(1_g) = 0 \ \forall g \in \Gamma \right\} \ .
\end{equation*}
We define a representation $\gamma$ of $\Gamma$ by $*$-automorphisms of $\mathcal B(\Gamma)$ such that
$$
\gamma_g(f)(h) = f(g^{-1}h) \ .
$$
Since $\gamma_{g'}(1_g) = 1_{g'g}$ and $\gamma_{g}(\Delta_h) = \Delta_{hg^{-1}} - \Delta_{g^{-1}}$ it follows that $\gamma$ leaves $\mathcal B_{\omega}$ globally invariant and we can therefore also consider $\gamma$ as a representation of $\Gamma$ by $*$-automorphisms of $\mathcal B_{\omega}$. This induces a representation $\phi$ of $\Gamma$ by homeomorphisms of $\overline{\Gamma}^{\omega}$ defined such that
$$
\phi_g(\kappa) = \kappa \circ \gamma_{g^{-1}} \ .
$$
Since $\phi_g\left(\partial_{\omega} \Gamma\right) = \partial_{\omega} \Gamma$ we may also consider $\phi$ as a representation $\phi$ of $\Gamma$ by homeomorphisms of $\partial_{\omega} \Gamma$. The equality $\phi_g(c_h) = c_{gh}$ shows that the action $\phi$ is the unique action of $\Gamma$ on $\overline{\Gamma}^{\omega}$ which extends the canonical left action of $\Gamma$ on itself. As in the work of Rieffel we can then consider the crossed products $C(\partial_{\omega}\Gamma) \rtimes \Gamma$ and $C(\partial_{\omega}\Gamma) \rtimes_r \Gamma$.

%Now define $\overline{D} : \Gamma \to C\left(\overline{\Gamma}^{\omega}\right)$ such that
%\begin{equation}\label{29-05-19e}
%\overline{D}_g (\omega) = -\omega\left(\Delta_{g}\right) \ .
%\end{equation}
%The restriction of $\overline{D}_g$ to $\partial_{\omega} \Gamma$ will be denoted by $D^{\omega}_g$. We call it the Busemann cocycle of $\omega$.

For each $g \in \Gamma$ define $\overline{D}_g \in C\left(\overline{\Gamma}^{\omega}\right)$ such that
\begin{equation}\label{09-03-20}
\overline{D}_g(\kappa) \ = \ - \kappa(\Delta_g) \ .
\end{equation}
Let $D_g \in  C\left(\partial_{\omega} \Gamma\right)$ be the restriction of $\overline{D}_g$ to $ \partial_{\omega} \Gamma$. It straightforward to check that $D : \Gamma \ \to \ C(\partial_{\omega} \Gamma)$ and $\overline{D} : \Gamma \to C(\overline{\Gamma}^{\omega})$ are $\phi$-cocycle{\color{red}s} on $\partial_{\omega} \Gamma$ and $\overline{\Gamma}^{\omega}$, respectively. As explained in the previous section we obtain from $D$ in a natural way flows on both $C(\partial_{\omega}\Gamma) \rtimes \Gamma$ and $C(\partial_{\omega}\Gamma) \rtimes_r \Gamma$. It was shown by Rieffel in Section 3 of \cite{Ri} that a representation of the full crossed product $C(\partial_{\omega}\Gamma) \rtimes \Gamma$ is a copy of the cosphere algebra of a spectral triple associated to the pair $(\Gamma,\omega)$; a construction introduced by Connes, \cite{C}. When the action of $\Gamma$ on $\partial_{\omega}\Gamma$ is amenable, as it is when $\Gamma$ is amenable or hyperbolic, the representation is faithful and the cosphere bundle is then isomorphic to $C(\partial_{\omega}\Gamma) \rtimes \Gamma = C(\partial_{\omega}\Gamma) \rtimes_r \Gamma$ by Theorem 3.7 in \cite{Ri}. By comparing with Rieffels formula for the geodesic flow it is apparent that the flow $\alpha^D$ we consider here is the the geodesic flow on the cosphere algebra, provided $\omega \geq 0$ as will be the case in the following.

%{\color{red} Jeg er usikker paa sidste udsagn. Jeg forvirret omkring foelgende: Den konstruktion Rieffel laver i afsnit 3 betragter den unitaere gruppe $U_{t}=\exp(it|D|)$, hvor $D=M_{\omega}$ i vores tilfaelde. Det faar, saa vidt jeg kan se, Rieffel til at antage at $\omega \geq 0$. Mit spoergsaal er saa, kan man vaere sikker paa, at 1-parameter gruppen givet ved $\exp(it|M_{\omega}|)=\exp(itM_{|\omega|})$ er den samme som den vi betragter? Vi har iflg side 619 i Rieffel at $C_{\omega}=\pi(C(\overline{G}^{\omega})\rtimes G)$ hvor $\pi(fU_{g})=M_{f}\pi_{g}$, hvor $\pi_{g}(1_{h})=1_{gh}$. $U_{t}=\exp(itM_{|\omega|})=M_{\exp(it |\omega|(\cdot))}$ giver saa ligesom i bunden af side 618 at:
%$$
%U_{t} \pi(U_{g})U_{t}^{*}
%=M_{\exp(it |\omega|(\cdot))}\pi_{g}M_{\exp(-it |\omega|(\cdot))})
%=\pi_{g}M_{\alpha_{g^{-1}}(\exp(it |\omega|(\cdot)))}M_{\exp(-it |\omega|(\cdot))}
%$$
%og at
%\begin{align*}
%&[\alpha_{g^{-1}}(\exp(it |\omega|(\cdot))\exp(-it |\omega|(\cdot))](x)
%=\exp(it |\omega|(gx))\exp(-it |\omega|(x)) \\
%&=\exp(it (|\omega(gx)|-|\omega(x)|))
%\end{align*}
%hvilket burde have vaeret $\exp(it (\omega(gx)-\omega(x)))$ ? Altsaa, at det geodesiske flow er givet ved den translationsbegraensede funktion $|\omega|$ istedet for $\omega$.
%}

\section{Flows from random walks on $\Gamma$}\label{walks}
 Let $\mu : \Gamma \to [0,1]$ be a function such that $\sum_{g \in \Gamma} \mu(g) = 1$. We assume that the support $\supp \mu = \left\{ g \in \Gamma: \ \mu(g) > 0\right\}$ of $\mu$ generates $\Gamma$ as a semi-group. As outlined in the introduction the pair $(\Gamma,\mu)$ can be interpreted as a random walk on $\Gamma$ such that the probability of going from $x$ to $y$ is $\mu(x^{-1}y)$. We shall rely on many results from this area of mathematical research and refer to the two monographs \cite{Wo1} and \cite{Wo2} by W. Woess for an introduction.

  The function $\mu$ gives rise to a matrix over $\Gamma$ which we also denote by $\mu$, namely $\mu_{x,y} = \mu(x^{-1}y)$. The higher powers $\mu^n, n =0,1,2,\cdots$, of $\mu$ can then be defined in the usual way and, in particular,
  $$
  \mu^0_{x,y}  \ = \ \begin{cases} 0 \ , \ x \neq y , \\ 1 \ , \ x  =  y \ . \end{cases}
  $$  
  Thus the probability of ending at $y \in \Gamma$ after $n$ steps when starting at $x\in \Gamma$ is the number $\mu^n_{x,y}$. We will assume that the random walk is transient, meaning that
\begin{equation*}\label{transience}
G(x,y) \ \overset{def}{=} \ \sum_{n =0}^{\infty} \mu^n_{x,y} \ < \ \infty
\end{equation*}
for all $x,y \in \Gamma$. The function $G : \Gamma \times \Gamma \to \mathbb R$ is known as the Green kernel of the random walk $(\Gamma,\mu)$. Note that $G$ is $\Gamma$-invariant in the sense that
\begin{equation}\label{28-05-19b}
G(gx,gy) \ = \ G(x,y)
\end{equation}
for all $g,x,y \in \Gamma$.
Since $\supp \mu$ generates $\Gamma$ the Green kernel is positive and we define $\omega_{\mu} : \Gamma \to \mathbb R$ such that
$$
\omega_{\mu}(g) \ = \ \log G(e,e) \ - \ \log G(e,g)  \ .
$$
The number $\omega_{\mu}(g)$ is the distance from $e$ to $g$ in the Green metric which was introduced by Blach\`ere and Brofferio in \cite{BF}. The Green metric is not always a genuine metric.

\begin{lemma}\label{28-05-19a} 
$$
\left|\omega_{\mu}(gh) \ - \ \omega_{\mu}(h)\right| \ \leq \ \log G(e,e) \ - \ \min \left\{ \log G(e,g^{-1}), \ \log G(e,g)\right\}  
$$ 
for all $g,h \in \Gamma$.
\end{lemma}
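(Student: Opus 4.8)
The plan is to reduce the estimate to a standard submultiplicativity property of the Green kernel. First I would unwind the definition of $\omega_\mu$: since $\omega_\mu(x) = \log G(e,e) - \log G(e,x)$, the difference telescopes,
$$
\omega_\mu(gh) - \omega_\mu(h) \ = \ \log G(e,h) - \log G(e,gh) \ = \ \log \frac{G(e,h)}{G(e,gh)} .
$$
Invoking the $\Gamma$-invariance \eqref{28-05-19b} in the form $G(e,gh) = G(g^{-1},h)$, the right-hand side becomes $\log\bigl(G(e,h)/G(g^{-1},h)\bigr)$. Thus the whole statement reduces to bounding this single ratio from above and below by the quantities $G(e,g)$ and $G(e,g^{-1})$.

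The key step is a first-passage inequality: for all $x,y,z \in \Gamma$,
$$
G(x,z)\, G(e,e) \ \geq \ G(x,y)\, G(y,z).
$$
This is nothing but the supermultiplicativity $F(x,z) \geq F(x,y)F(y,z)$ of the visiting probabilities (equivalently, the triangle inequality for the Green metric of \cite{BF}): decomposing the expected number of visits to $z$ started at $x$ according to the first hitting time of $y$, the strong Markov property shows that the contribution of paths reaching $y$ equals $F(x,y)G(y,z)$ while the remaining term is nonnegative, so $G(x,z) \geq F(x,y)G(y,z)$; substituting $F(x,y) = G(x,y)/G(y,y)$ together with $G(y,y) = G(e,e)$ gives the displayed form. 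Applying it with $(x,y,z)=(e,g^{-1},h)$ yields $G(e,h)/G(g^{-1},h) \geq G(e,g^{-1})/G(e,e)$, and applying it with $(x,y,z)=(g^{-1},e,h)$, using $G(g^{-1},e) = G(e,g)$ from invariance, yields $G(e,h)/G(g^{-1},h) \leq G(e,e)/G(e,g)$.

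Taking logarithms I would obtain
$$
\log G(e,g^{-1}) - \log G(e,e) \ \leq \ \omega_\mu(gh) - \omega_\mu(h) \ \leq \ \log G(e,e) - \log G(e,g).
$$
Since $F\leq 1$ forces $G(e,x) \leq G(e,e)$, both outer expressions are nonnegative, so the absolute value is dominated by their maximum, and $\max\{\log G(e,e) - \log G(e,g^{-1}),\ \log G(e,e) - \log G(e,g)\} = \log G(e,e) - \min\{\log G(e,g^{-1}),\ \log G(e,g)\}$, which is exactly the claimed bound. The \textbf{main obstacle} is purely the clean justification of the first-passage inequality and keeping the invariance bookkeeping straight (which argument is multiplied on the left in each application); once that inequality is secured the remainder is elementary manipulation of logarithms.
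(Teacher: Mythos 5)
Your proof is correct and follows essentially the same route as the paper: the key inequality $G(x,z)G(e,e)\ge G(x,y)G(y,z)$ is precisely the supermultiplicativity $F(x,z)\ge F(x,y)F(y,z)$ that the paper imports from Theorem 1.38(b) and Proposition 1.43 of \cite{Wo2} (displayed there as \eqref{01-05-20}), and your two applications with the triples $(e,g^{-1},h)$ and $(g^{-1},e,h)$ reproduce the paper's upper bound together with its substitution $h\mapsto gh$, $g\mapsto g^{-1}$ for the lower bound. The only difference is that you sketch the strong-Markov-property proof of the first-passage inequality rather than citing it, which is fine.
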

\begin{proof} Since $G(y,y) = G(e,e)$ for all $y\in \Gamma$ it follows by combining (b) of Theorem 1.38 in \cite{Wo2} with Proposition 1.43 in \cite{Wo2} that
\begin{equation}\label{01-05-20}
\frac{G(e,g)}{G(e,e)} \frac{G(g,gh)}{G(e,e)} \ \leq \ \frac{G(e,gh)}{G(e,e)} 
\end{equation}
for all $g,h \in \Gamma$, which implies that $\omega_{\mu}(gh) - \omega_{\mu}(h) \ \leq \ \log G(e,e) - \log G(e,g)$. Thus
$\omega_{\mu}(h)  -  \omega_{\mu}(gh)  =  \omega_{\mu}(g^{-1}gh)  -  \omega_{\mu}(gh) \ \leq \ \log G(e,e)  -  \log G(e,g^{-1})$.
\end{proof}

It follows from Lemma \ref{28-05-19a} that $\omega_{\mu}$ is translation bounded and we can now construct the $\omega_{\mu}$-boundary as in Section \ref{bussec}. We denote it by $\partial_{\mu}\Gamma$. The corresponding functions from \eqref{28-05-19d} are then given by
\begin{align*}\label{29-05-19c}
&\Delta^{\mu}_g(h) \ \overset{def}{=} \ \omega_{\mu}(gh) - \omega_{\mu}(h) \ = \ \log G(e,h) - \log G(e,gh) \\
& = \ - \log \frac{G(g^{-1},h)}{G(e,h)} \ .
\end{align*}
The function $K: \Gamma \times \Gamma \to ]0,\infty[$ defined by
$$
K(g,h) \ = \ \frac{G(g,h)}{G(e,h)}
$$
is the Martin kernel, cf. \cite{Wo2}. We note that it follows from  Theorem 1.38 and Proposition 1.43 in \cite{Wo2} that
\begin{equation}\label{01-05-20a}
\frac{G(g,e)}{G(e,e)} \leq K(g,h) \leq \frac{G(e,e)}{G(e,g)} \ \ \forall g,h \in \Gamma \ ,
\end{equation}
and from \eqref{28-05-19b} that
\begin{equation}\label{01-05-20e}
K(g^{-1}, h)K(x,gh) \ = \ K(g^{-1}x,h) \ \ \forall g,h,x \in \Gamma \ .
\end{equation}
Since
\begin{equation}\label{29-05-19d}
\Delta^{\mu}_g(h) \ = \ - \log K(g^{-1},h) \ ,
\end{equation}
it follows from the construction of $\overline{\Gamma}^{\omega_{\mu}}$ that the function
$$
\Gamma \ni h \mapsto K(g,h) \ = \ \exp \left(-\Delta^{\mu}_{g^{-1}}(h)\right)
$$
extends to a continuous function on $\overline{\Gamma}^{\omega_{\mu}}$ for all $g$ and the resulting functions collectively separate the points of $\partial_{\mu} \Gamma$. It follows therefore that there is an identification 
\begin{equation}\label{29-05-19}
\partial_{\mu}\Gamma \ = \ \partial_M \Gamma \ ,
\end{equation}
where $\partial_{\mu}\Gamma$ is the $\omega_{\mu}$-boundary of Rieffel and $\partial_M \Gamma$ denotes the Martin boundary of the random walk $(\Gamma,\mu)$, cf. Definition 7.17 in \cite{Wo2}. The action $\phi$ of $\Gamma$ on $\partial_M \Gamma$ which results from the identification \eqref{29-05-19} is the same as the one usually considered on the Martin boundary; a fact which follows because both are extensions by continuity of the left action of $\Gamma$ on itself. From now on we mostly suppress $\phi$ in the notation and write $\phi_g(\xi) = g\xi$ when $\xi \in \partial_M \Gamma$.

It follows that we get a cocycle $D^{\mu} : \Gamma \to C(\partial_M \Gamma)$ from the functions $\Delta^{\mu}_g$, and by comparing \eqref{29-05-19d} and \eqref{09-03-20} we find that 
\begin{equation}\label{20-01-20}
D^{\mu}_g(\xi) \ = \ \log K(g^{-1},\xi) \ , \ \ \ \ \ \xi \in \partial_M \Gamma \ ,
\end{equation}
where $K : \Gamma \times \partial_M \Gamma \to ]0,\infty[$ is the continuous extension of the Martin kernel $K :\Gamma \times \Gamma \to ]0,\infty[$. We note that
$$
\frac{G(g,e)}{G(e,e)} \ \leq \ K(g,\xi) \ \leq \ \frac{G(e,e)}{G(e,g)} \ \ \ \  \forall g \in \Gamma \ \forall \xi \in \partial_M \Gamma \ 
$$ 
and
\begin{equation}\label{16-04-19x}
K(x,g\xi){K(g^{-1},\xi)} \ = \ {K(g^{-1}x,\xi)} \ \ \ \ \ \forall x,g \in \Gamma, \ \forall \xi \in \partial_M\Gamma  \ .
\end{equation}

In the following we seek to determine the KMS-states for the flow $\alpha^{D^{\mu}}$ on $C\left(\partial_M \Gamma\right) \rtimes_r \Gamma$. Since
$$
e^{\beta D^{\mu}_g} \ = \ K(g^{-1}, \cdot)^{\beta} \ ,
$$
an $e^{\beta D^{\mu}}$-conformal measure will in the following be called a $K^{\beta}$-conformal measure and $K$-conformal when $\beta =1$.

\section{The $K^{\beta}$-conformal measures}

By definition we have the following, cf. \eqref{14-03-19a} and \eqref{20-01-20}.
\begin{lemma}\label{24-04-19a} Let $\beta \in \mathbb R$. A Borel probability measure $m$ on $\partial_M \Gamma$ is $K^{\beta}$-conformal if and only if
\begin{equation}\label{12-01-20a}
m\left(g^{-1}  B\right) \ = \ \int_BK(g,\xi)^{\beta} \ \mathrm{d} m(\xi)
\end{equation}
for all $g \in \Gamma$ and every Borel set $B \subseteq \partial_M \Gamma$.
\end{lemma}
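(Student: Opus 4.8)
The plan is to unwind the general definition of conformality from \eqref{14-03-19a} in the present setting and then read off \eqref{12-01-20a} by the substitution $g \mapsto g^{-1}$. First I would recall that here $X = \partial_M \Gamma$, that the action is $\phi_g(\xi) = g\xi$, so that $\phi_g(B) = gB$ for any Borel set $B$, and that the relevant cocycle is $D^{\mu}$ with $e^{\beta D^{\mu}_g} = K(g^{-1}, \cdot)^{\beta}$ by \eqref{20-01-20}. Inserting these three facts into the defining condition \eqref{14-03-19a} shows that $m$ is $K^{\beta}$-conformal if and only if
$$
m(gB) \ = \ \int_B K(g^{-1},\xi)^{\beta} \ \mathrm{d} m(\xi)
$$
for all $g \in \Gamma$ and all Borel sets $B \subseteq \partial_M \Gamma$.

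Second, I would observe that since $\Gamma$ is a group, this family of identities, ranged over all $g \in \Gamma$, is unchanged if we replace $g$ by $g^{-1}$ throughout: the quantifier still runs over the whole of $\Gamma$. Carrying out this replacement turns the displayed identity into
$$
m(g^{-1}B) \ = \ \int_B K(g,\xi)^{\beta} \ \mathrm{d} m(\xi),
$$
which is precisely \eqref{12-01-20a}, and the same substitution applied in reverse recovers the conformality condition, giving the asserted equivalence. The argument is purely a translation of definitions; the only point demanding any care is the bookkeeping of the inverses appearing in the index of the Martin kernel, coming from the presence of $K(g^{-1},\cdot)$ in \eqref{20-01-20} together with $\phi_g(B) = gB$. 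I do not expect any genuine obstacle, and indeed this is why the statement can be prefaced by the remark that it holds by definition.
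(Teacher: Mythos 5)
Your proof is correct and is exactly the unwinding the paper intends when it prefaces the lemma with ``By definition ... cf.\ \eqref{14-03-19a} and \eqref{20-01-20}''; the inverse bookkeeping ($\phi_g(B)=gB$ together with $e^{\beta D^{\mu}_g}=K(g^{-1},\cdot)^{\beta}$, followed by the substitution $g\mapsto g^{-1}$) is handled correctly.
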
 
As we now explain it follows from some of the fundamental results on random walks that there always exists a $K^{\beta}$-conformal measure on $\partial_M \Gamma$ when $\beta = 1$.

 The path space of the random walk is the space
$\Gamma^{\mathbb N} = \prod_{i=0}^{\infty}\Gamma$ consisting of sequences $x = (x_0,x_1,x_2,\cdots)$ where $x_i \in \Gamma$ for all $i$ which we consider as a topological space with the product topology. Note that $\Gamma$ acts on $\prod_{i=0}^{\infty}\Gamma$ by homeomorphism in the natural way: $gx = (gx_0,gx_1,gx_2, \cdots )$. Set 
$$
\mathcal C_0 \ = \ \left\{(x_n)_{n=1}^{\infty} : \  \lim_{n \to \infty} K(h,x_n) \ \text{exists for all} \ h \in \Gamma \ \right\} \ ;
$$
a Borel subset of $\Gamma^{\mathbb N}$. For $(x_n)_{n =1}^{\infty} \in \Gamma^{\mathbb N}$ we write $\lim_{n \to \infty} x_n = \infty$ when $(x_n)_{n=1}^{\infty}$ eventually stays out of every finite subset of $\Gamma$, and note that
$$
\mathcal C \ = \ \left\{ (x_n)_{n=1}^{\infty} \in \mathcal C_0: \ \lim_{n \to \infty} x_n = \infty \right\}
$$
is also a Borel subset of $\Gamma^{\mathbb N}$. It follows that we can define a Borel map $\chi : \mathcal C \to \partial_M\Gamma$ such that
$$
K(h,\chi(x)) = \lim_{n \to \infty} K(h, x_n) \ \ \forall h \in \Gamma \ 
$$
when $ x = (x_n)_{n=1}^{\infty} \in \mathcal C$. It follows from \eqref{16-04-19x} that $\mathcal C$ is $\Gamma$-invariant and $\chi$ equivariant; i.e. $g\mathcal C = \mathcal C$ and $\chi(gx) = g\chi(x)$. Note that $\chi$ is a surjective Borel map.

%\begin{lemma}\label{07-01-20} $\chi: \mathcal C \to \partial_M \Gamma$ is surjective.
%\end{lemma}
%\begin{proof} Let $\xi \in \partial_M\Gamma$. Since $\Gamma$ is dense in $\overline{\Gamma}^{\omega_{\mu}}$ there is a sequence $x = (x_n)_{n=1}^{\infty}$ in $\Gamma^{\mathbb N}$ such that $\lim_n x_n = \xi$ in $\overline{\Gamma}^{\omega_{\mu}}$. It follows that $x \in \mathcal C$ and $\chi(x) = \xi$. 
%\end{proof}

For each group element $g \in \Gamma$ there is a continuous map $\pi_g : \prod_{i=1}^{\infty} \Gamma \to \prod_{i=1}^{\infty} \Gamma$ defined such that
$$
\pi_g(s_1,s_2,s_3 {\color{red} , }\cdots) \ = \ (g,gs_1,gs_1s_2, gs_1s_2s_3,\cdots ) \ .
$$
Note that $g \pi_e(x) = \pi_g(x)$. Let $\mu^{\infty}$ be the Borel probability measure on $\prod_{i=1}^{\infty}\Gamma$ obtained as the infinite product of the measure $\mu$. The push-forward of $\mu^{\infty}$ under the map $\pi_g$ is the Borel probability measure ${}_g\mathbb P^{\mu}$ on $\prod_{i=1}^{\infty}\Gamma$ defined such that
$$
{}_g\mathbb P^{\mu}(B) \ = \ \mu^{\infty}\left(\pi_g^{-1}(B)\right)  \ .
$$
It will be important to note that
\begin{equation*}\label{23-04-19e}
{}_g\mathbb P^{\mu}(B) \ = \ {}_e\mathbb P^{\mu}\left( g^{-1} B\right) \ 
\end{equation*}
for every Borel subset $B \subseteq \prod_{i=1}^{\infty} \Gamma$. The measures ${}_g\mathbb P^{\mu}(B)$ are all concentrated on $\mathcal C$, cf. \cite{Wo2}, and we push ${}_g \mathbb P^{\mu}$ forward via $\chi$ to $\partial_M \Gamma$; that is, we define a Borel probability measure ${}_g\nu$ on $\partial_M \Gamma$ by %{\color{red} Her er det vaerd at henvise til \cite{Wo2}, da det bliver fejet under gulvtaeppet at ${}_g\mathbb P^{\mu}$ er stoettet paa $\mathcal{C}$} 
$$
{}_g\nu(B) \ = \ {}_g\mathbb P^{\mu}\left(\chi^{-1}(B)\right) \ .
$$  
Since $\chi$ is equivariant and $\pi_g = g\pi_e$ it follows that
\begin{equation}\label{23-04-19f}
{}_g \nu(B) \ = \ {}_e\nu\left(g^{-1} B\right) \ .
\end{equation}

The minimal Martin boundary $\partial_m\Gamma$ is the set of elements $\xi \in \partial_M \Gamma$ with the property that the function $\Gamma \ni g \mapsto K(g,\xi)$ is a minimal $\mu$-harmonic function, i.e. is an extremal point in the convex set of function {\color{red} s} $\varphi : \Gamma \to [0,\infty)$ for which
\begin{itemize}
\item $\sum_{s \in \Gamma} \mu(s)\varphi(gs) \ = \ \varphi(g)$ for all $g \in \Gamma$, and
\item $\varphi(e) = 1$.
\end{itemize}
 The minimal Martin boundary $\partial_m \Gamma$ is a Borel subset of $\partial_M \Gamma$, cf. Lemma 7.52 in \cite{Wo2}. 
 
\begin{thm}\label{07-01-20xx} The measures ${}_g\nu$ are all concentrated on $\partial_m\Gamma$, i.e. ${}_g \nu(\partial_m \Gamma) =1$ for all $g \in \Gamma$.
\end{thm}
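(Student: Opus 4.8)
The plan is to reduce the statement to the single measure ${}_e\nu$ and then to recognise the resulting assertion as the classical fact that a transient random walk converges almost surely to a point of the minimal Martin boundary. First I would record that $\partial_m\Gamma$ is a $\Gamma$-invariant Borel subset of $\partial_M\Gamma$. Indeed, rewriting \eqref{16-04-19x} as $K(x,g\xi) = K(g^{-1}x,\xi)/K(g^{-1},\xi)$ exhibits the harmonic function $x \mapsto K(x,g\xi)$ as a positive multiple of the left translate $x\mapsto K(g^{-1}x,\xi)$ of $x\mapsto K(x,\xi)$. Since left translation $\varphi \mapsto \varphi(g^{-1}\,\cdot\,)$ and normalisation by the positive number $\varphi(g^{-1})$ give an affine bijection of the base of the cone of positive $\mu$-harmonic functions that preserves its extreme points, we get $g\xi \in \partial_m\Gamma$ if and only if $\xi \in \partial_m\Gamma$; that is, $g^{-1}\partial_m\Gamma = \partial_m\Gamma$. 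Combining this with \eqref{23-04-19f} yields ${}_g\nu(\partial_m\Gamma) = {}_e\nu(g^{-1}\partial_m\Gamma) = {}_e\nu(\partial_m\Gamma)$ for every $g\in\Gamma$, so it suffices to prove ${}_e\nu(\partial_m\Gamma) = 1$.

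By construction ${}_e\nu = \chi_*\,{}_e\mathbb P^{\mu}$ is the distribution of the exit point $X_\infty = \chi(x)$ of the walk $x = (x_n)$ started at $e$, so the remaining task is precisely that $X_\infty$ lies in $\partial_m\Gamma$ almost surely. Here I would bring in the two standard probabilistic ingredients. First, for each fixed $\xi \in \partial_M\Gamma$ the function $x\mapsto K(x,\xi)$ is $\mu$-harmonic, so $n\mapsto K(x_n,\xi)$ is a nonnegative martingale for the filtration $\mathcal F_n = \sigma(x_1,\dots,x_n)$ under ${}_e\mathbb P^{\mu}$; because $K(e,\xi)=1$ it is exactly the likelihood ratio on $\mathcal F_n$ of the Doob $h$-transform ${}^{\xi}\mathbb P$ attached to $h = K(\cdot,\xi)$ relative to ${}_e\mathbb P^{\mu}$. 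Second, minimality of $K(\cdot,\xi)$ is equivalent to triviality of the tail $\sigma$-algebra of the $h$-process ${}^{\xi}\mathbb P$. Disintegrating ${}_e\mathbb P^{\mu} = \int_{\partial_M\Gamma} {}^{\xi}\mathbb P \,\mathrm d{}_e\nu(\xi)$ over the exit point identifies ${}^{\xi}\mathbb P$, for ${}_e\nu$-a.e.\ $\xi$, with the conditional law of the walk given $\{X_\infty = \xi\}$, which is Doob's conditioning of the chain to leave at $\xi$.

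The hard part is the final step: proving that for ${}_e\nu$-a.e.\ $\xi$ the harmonic function $K(\cdot,\xi)$ is extremal, equivalently that the conditioned walk ${}^{\xi}\mathbb P$ converges to $\xi$ and has trivial tail. This is the substance of Hunt's theorem on almost sure convergence to the minimal Martin boundary, which for random walks is Theorem 7.61 in \cite{Wo2}, and I would invoke it directly. For a self-contained argument the mechanism is to use the martingale convergence of $K(x_n,\xi)$ to control the densities $\mathrm d{}^{\xi}\mathbb P/\mathrm d{}_e\mathbb P^{\mu}\big|_{\mathcal F_n}$ and then a Fubini and tail $0$--$1$ law argument to exclude ${}_e\nu$-mass on the non-minimal part of $\partial_M\Gamma$, where the representation of $K(\cdot,\xi)$ would split non-trivially. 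Establishing this extremality almost everywhere is the only genuinely non-formal point; the reduction of the first paragraph and the martingale and disintegration setup of the second are routine bookkeeping with \eqref{16-04-19x}, \eqref{23-04-19f} and the harmonicity of the Martin kernel.
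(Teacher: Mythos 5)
Your argument is correct and ultimately rests on the same ground as the paper, whose entire proof is the citation ``See Section 7 in \cite{Wo2} or \cite{Sa}'': the classical theorem that the exit (harmonic) measure of a transient chain is concentrated on the minimal Martin boundary already covers every starting point $g$ at once, so your equivariance reduction to ${}_e\nu$ via \eqref{16-04-19x} and \eqref{23-04-19f}, while correct, is not needed. Two minor caveats: the relevant statements in \cite{Wo2} are the convergence/representation results around Theorems 7.42--7.53 rather than Theorem 7.61 (which is the Poisson formula for bounded harmonic functions), and for a general transient $(\Gamma,\mu)$ the kernel $K(\cdot,\xi)$, $\xi\in\partial_M\Gamma$, is only guaranteed to be superharmonic --- harmonicity is what Lemma \ref{30-05-19} provides under the hypothesis \eqref{07-01-20}, which Theorem \ref{07-01-20xx} does not assume --- so the martingale in your second paragraph should be a supermartingale.
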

\begin{proof} See Section 7 in \cite{Wo2} or \cite{Sa}.
\end{proof} 
 
  The measure ${}_e\nu$ on $\partial_M \Gamma$ will be called the harmonic measure of the random walk $(\Gamma ,\mu)$. It is of fundamental importance that the measures ${}_g \nu$ are absolutely continuous with respect to each other and that the corresponding Radon-Nikodym derivatives are given by the Martin kernels. More precisely, by combining \eqref{23-04-19f} with Theorem 7.42 in \cite{Wo2} we get the following. Alternatively, see Theorem 5.1 in \cite{Sa}.

\begin{thm}\label{23-04-19c} For every $g \in \Gamma$ and every Borel subset $B \subseteq \partial_M \Gamma$,
$$
{}_e\nu(g^{-1} B) \ = \ {}_g\nu(B) \ = \ \int_B K(g,\xi) \ \mathrm{d}{}_e\nu(\xi) \ .
$$
\end{thm}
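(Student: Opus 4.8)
The first equality ${}_e\nu(g^{-1}B) = {}_g\nu(B)$ is exactly \eqref{23-04-19f}, so the entire content lies in the second equality, which asserts that the push-forward of the harmonic measure ${}_e\nu$ under the homeomorphism $\xi \mapsto g\xi$ is absolutely continuous with respect to ${}_e\nu$ with Radon-Nikodym derivative $K(g,\cdot)$. The plan is to prove this in the equivalent form $\int f \, \mathrm{d}\,{}_g\nu = \int f(\xi)K(g,\xi) \, \mathrm{d}\,{}_e\nu(\xi)$ for every $f \in C(\partial_M\Gamma)$, since such $f$ determine the measures.

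The first step is to convert $f$ into a harmonic function on $\Gamma$. Writing $(X_n)_{n\ge 0}$ for the coordinate process on the path space and $X_\infty = \chi\big((X_n)\big)$ for its boundary limit, I would set $u_f(x) = \int_{\partial_M\Gamma} f \, \mathrm{d}\,{}_x\nu = \mathbb{E}_x\big[f(X_\infty)\big]$. Conditioning on the first step and using the Markov property gives $u_f(x) = \sum_{s}\mu(s)u_f(xs)$, so $u_f$ is a bounded harmonic function, and by construction $u_f(g) = \int f \, \mathrm{d}\,{}_g\nu$ is precisely the left-hand side to be computed. After adding a constant I may assume $u_f \ge 0$, and since $u_f$ is then a positive harmonic function I can invoke the Poisson--Martin integral representation (this is where Theorem~\ref{07-01-20xx}, that the exit measures are carried by $\partial_m\Gamma$, is needed): there is a unique finite measure $\sigma$ on $\partial_m\Gamma$ with $u_f(x) = \int_{\partial_m\Gamma}K(x,\xi) \, \mathrm{d}\sigma(\xi)$ for all $x \in \Gamma$.

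It then remains to identify $\sigma = f\cdot{}_e\nu$, for once this is established, evaluating the representation at $x = g$ yields $\int f \, \mathrm{d}\,{}_g\nu = u_f(g) = \int K(g,\xi)f(\xi) \, \mathrm{d}\,{}_e\nu(\xi)$, which is the theorem. To identify $\sigma$ I would use the probabilistic Fatou theorem: by the Markov property $u_f(X_n) = \mathbb{E}_e\big[f(X_\infty) \,\big|\, \mathcal{F}_n\big]$ is a bounded martingale, so by L\'evy's theorem $u_f(X_n) \to f(X_\infty)$ almost surely under ${}_e\mathbb{P}^{\mu}$; on the other hand $u_f(X_n) = \int K(X_n,\xi) \, \mathrm{d}\sigma(\xi)$, and the Fatou theorem for the Martin representation identifies this boundary limit with $(\mathrm{d}\sigma/\mathrm{d}\,{}_e\nu)(X_\infty)$, pinning down $\mathrm{d}\sigma = f \, \mathrm{d}\,{}_e\nu$. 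The special case $f \equiv 1$ here is the assertion that ${}_e\nu$ is exactly the representing measure of the constant harmonic function $1$, i.e.\ $\int_{\partial_m\Gamma}K(x,\xi) \, \mathrm{d}\,{}_e\nu(\xi) = 1$ for all $x$.

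I expect the main obstacle to be this last identification, namely matching the Martin representing measure of a bounded harmonic function with the distribution of its boundary values under the harmonic measure, and in particular ruling out a singular part. This is precisely the convergence-to-the-boundary and Poisson-representation circle of results for transient walks, and it is the substance of Theorem 7.42 in \cite{Wo2} (and of Theorem 5.1 in \cite{Sa}); the cleanest way to carry it out rigorously is through the minimal-harmonic Doob transforms, the $K(\cdot,\xi)$-processes, which converge almost surely to $\xi$, so that ${}_e\nu$ disintegrates ${}_e\mathbb{P}^{\mu}$ over the boundary into these transforms. Everything else, namely the harmonicity and boundedness of $u_f$ and the final substitution, is routine.
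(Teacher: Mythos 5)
Your proposal is correct, but it is doing more work than the paper does: the paper offers no argument at all for this theorem beyond combining \eqref{23-04-19f} (which, as you say, is the first equality) with a citation of Theorem 7.42 in \cite{Wo2} (alternatively Theorem 5.1 in \cite{Sa}), which \emph{is} the second equality, i.e.\ the statement that $\mathrm{d}\,{}_x\nu/\mathrm{d}\,{}_e\nu = K(x,\cdot)$. What you have written is essentially a reconstruction of the proof of that cited result: pass from $f\in C(\partial_M\Gamma)$ to the bounded harmonic function $u_f(x)=\mathbb{E}_x[f(X_\infty)]$, represent it by a measure $\sigma$ on $\partial_m\Gamma$ (here Theorem~\ref{07-01-20xx} enters, as you note), and identify $\sigma=f\cdot{}_e\nu$ by martingale convergence together with the Fatou-type theorem for Martin representations. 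That identification is indeed the only non-routine point, and your remark that it is cleanest via the Doob $K(\cdot,\xi)$-transforms and the disintegration of ${}_e\mathbb{P}^{\mu}$ over the boundary is exactly how the result is proved in \cite{Wo2}. One small point worth being explicit about: when you add a constant to make $u_f\ge 0$ and then subtract it off inside the Martin representation, you are implicitly using that the representing measure of the constant function $1$ is ${}_e\nu$ itself; that is the case $f\equiv 1$ of what you are proving, so in a fully written-out version you should either prove all $f$ simultaneously through the disintegration (as in \cite{Wo2}) or establish the $f\equiv 1$ case first and then use uniqueness of representing measures on $\partial_m\Gamma$ for the general case. In short: the approach is sound and is the standard one; the difference from the paper is only that the paper treats this as a quotable fact from the random-walk literature while you supply its proof.
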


By comparing with Lemma \ref{24-04-19a} we get

\begin{cor}\label{25-04-19} The harmonic measure ${}_e\nu$ is $K$-conformal and ergodic for the action of $\Gamma$.
\end{cor}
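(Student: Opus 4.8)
The $K$-conformality of ${}_e\nu$ requires no work beyond what is already displayed: the identity in Theorem \ref{23-04-19c} asserts that ${}_e\nu(g^{-1}B) = \int_B K(g,\xi)\,\mathrm{d}{}_e\nu(\xi)$ for every $g\in\Gamma$ and every Borel $B$, which is verbatim the defining condition \eqref{12-01-20a} for a $K$-conformal measure in Lemma \ref{24-04-19a} with $\beta=1$. So the substance of the corollary is the ergodicity, and the plan is to pull this back to the path space, where the driving measure is a Bernoulli product and the shift is ergodic.

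First I would record that ${}_e\nu$ is the push-forward of the product measure $\mu^{\infty}$ under the composite boundary map $b := \chi\circ\pi_e$, since ${}_e\mathbb P^{\mu} = (\pi_e)_*\mu^{\infty}$ and hence ${}_e\nu(B) = \mu^{\infty}(b^{-1}(B))$. Writing $\sigma$ for the Bernoulli shift $\sigma(s_1,s_2,\dots)=(s_2,s_3,\dots)$ and $s_1$ for the first coordinate, the central step is to establish the cocycle relation
$$
b(x) \ = \ s_1(x)\cdot b(\sigma x) \qquad \text{for } \mu^{\infty}\text{-a.e. } x \ .
$$
This I would prove by observing that $s_1\,\pi_e(\sigma x)$ is the path $\pi_e(x)=(e,s_1,s_1s_2,\dots)$ with its initial coordinate deleted, so that the limit defining $\chi$, which depends only on the tail of the path, is unchanged: $\chi(s_1\pi_e(\sigma x)) = \chi(\pi_e(x)) = b(x)$; on the other hand, equivariance of $\chi$ from \eqref{16-04-19x} together with $\pi_g=g\pi_e$ gives $\chi(s_1\pi_e(\sigma x)) = s_1\,b(\sigma x)$. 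Here one uses that $\pi_e(x),\pi_e(\sigma x)\in\mathcal C$ for $\mu^{\infty}$-a.e. $x$.

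With the cocycle relation in hand, let $B\subseteq\partial_M\Gamma$ be $\Gamma$-invariant, meaning ${}_e\nu(gB\,\triangle\,B)=0$ for all $g$, and set $A=b^{-1}(B)$. Since $x\in A$ iff $b(\sigma x)\in s_1(x)^{-1}B$ while $x\in\sigma^{-1}A$ iff $b(\sigma x)\in B$, I would compute
$$
\mu^{\infty}\!\left(A\,\triangle\,\sigma^{-1}A\right) \ = \ \sum_{g\in\Gamma}\mu(g)\,{}_e\nu\!\left(g^{-1}B\,\triangle\,B\right) \ = \ 0 \ ,
$$
using that $s_1$ is independent of $\sigma x$ under $\mu^{\infty}$ and that $b(\sigma x)$ again has law ${}_e\nu$ because $\sigma$ preserves $\mu^{\infty}$. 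Thus $A$ is shift-invariant modulo null sets, and ergodicity of the Bernoulli shift yields $\mu^{\infty}(A)\in\{0,1\}$, i.e. ${}_e\nu(B)\in\{0,1\}$.

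I expect the only genuine obstacle to be the passage from the merely almost-everywhere $\Gamma$-invariance of $B$ to the honest shift-invariance of $A$: one cannot apply the invariance pointwise but must average over the first increment. This is exactly where the countability of $\Gamma$ enters, allowing the single null set to be assembled as the countable sum $\sum_{g}\mu(g)\,{}_e\nu(g^{-1}B\,\triangle\,B)$; the remaining ingredients, namely equivariance of $\chi$, the identity $\pi_g=g\pi_e$, and ergodicity of the Bernoulli shift, are standard.
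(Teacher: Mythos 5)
Your proof is correct, but it takes a genuinely different route from the paper's. The paper argues entirely on the boundary: given a $\Gamma$-invariant Borel set $A$ with ${}_e\nu(A)>0$, it forms the normalized restriction $\nu(B)={}_e\nu(A)^{-1}{}_e\nu(A\cap B)$ and uses Theorem \ref{23-04-19c} together with the invariance of $A$ to verify $\int_{\partial_m\Gamma}K(g,\xi)\,\mathrm{d}\nu(\xi)={}_e\nu(A)^{-1}{}_e\nu(g^{-1}A)=1$ for all $g$; since $\nu$ is concentrated on $\partial_m\Gamma$ by Theorem \ref{07-01-20xx}, the uniqueness of the measure on the minimal Martin boundary representing the constant harmonic function $1$ (Theorem 7.53 in \cite{Wo2}) forces $\nu={}_e\nu$, i.e.\ ${}_e\nu(A)=1$. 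You instead pull everything back to the path space and invoke ergodicity of the Bernoulli shift via the relation $b(x)=s_1(x)\cdot b(\sigma x)$; this is the classical argument for ergodicity of harmonic measure and is sound: the relation holds $\mu^{\infty}$-a.e.\ because $\chi$ depends only on the tail of a path and $\pi_g=g\pi_e$, and your passage from a.e.\ $\Gamma$-invariance of $B$ to a.e.\ shift-invariance of $b^{-1}(B)$ by averaging over the first increment, $\mu^{\infty}(A\,\triangle\,\sigma^{-1}A)=\sum_g\mu(g)\,{}_e\nu(g^{-1}B\,\triangle\,B)$, is exactly the right way to handle the null sets. What the paper's approach buys is brevity given the cited machinery, and it keeps the argument on $\partial_m\Gamma$, which is reused elsewhere in the paper; what yours buys is self-containedness, since it needs only the construction of ${}_e\nu$ as a push-forward, independence of the increments, and the zero--one law, rather than the uniqueness theorem for representing measures. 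It also yields the formally stronger conclusion that sets invariant merely modulo null sets have measure $0$ or $1$.
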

\begin{proof} It follows immediately from Theorem \ref{23-04-19c} that ${}_e\nu$ is $K$-conformal. To see that it is also ergodic assume that $A \subseteq \partial_M \Gamma$ is a $\Gamma$-invariant Borel set such that ${}_e\nu(A) \neq 0$. Define a Borel probability measure $\nu$ on $\partial_M \Gamma$ such that $\nu(B) = {}_e\nu(A)^{-1} {}_e\nu(A \cap B)$. It follows from Theorem \ref{23-04-19c} and Theorem \ref{07-01-20xx} that
\begin{align*}
&\int_{\partial_m\Gamma} K(g,\xi) \ \mathrm{d}{}\nu(\xi) \ =  {}_e\nu(A)^{-1}  \ \int_{A} K(g,\xi) \ \mathrm{d}{}_e\nu(\xi) \\
& = \  {}_e\nu(A)^{-1}  {}_e\nu(g^{-1}A) \ = \ 1  
\end{align*}
for all $g$ and then from Theorem 7.53 in \cite{Wo2} that $\nu = {}_e \nu$, i.e. ${}_e\nu(A) = 1$. 
 \end{proof}
 
The harmonic measure ${}_e\nu$ plays a fundamental role in the theory of random walks, partly because the measure space $(\partial_M\Gamma,{}_e\nu)$ is a realization of the socalled Poisson boundary, cf. \cite{Wo2}. Note that this measure space is isomorphic to $(\partial_m\Gamma,{}_e\nu)$ by Theorem \ref{07-01-20xx}. It is known, \cite{Ka1}, that ${}_e\nu$ is either supported on a single point (i.e. the Poisson boundary is trivial) or else is non-atomic.

To proceed we need a condition which ensures that the function $\Gamma \ni g \mapsto K(g,\xi)$ is $\mu$-harmonic for every $\xi \in \partial_M \Gamma$. Let $d$ be the word metric on $\Gamma$ coming from a symmetric finite subset $S \subseteq \Gamma$ which generates $\Gamma$ as a semi-group. There is a constant $C > 0$ such that 
\begin{equation}\label{harnach}
\frac{G(x,z)}{G(y,z)} \ \leq \ C^{d(x,y)} 
\end{equation}
for all $x,y,z \in \Gamma$; this is known as a Harnack inequality, cf. (2.2) in \cite{Go}. We will assume that
\begin{equation}\label{07-01-20}
\sum_{g \in \Gamma} \mu(g) C^{d(g,e)} \ < \ \infty \ ;
\end{equation} 
%{\color{red}  Det virker med denne opskrivning som om, at en del af antagelsen i (5.5) omhandler at metrikken eksisterer og derfor at gruppen $\Gamma$ er endelig frembragt. Maaske er det tydeligere at skrive We assume that either
%\begin{align*}
%&\text{supp}(\mu) < \infty \\ 
%&\text{ or that there is a metric as in \eqref{harnach} with }\sum_{g \in \Gamma} \mu(g) C^{d(g,e)} \ < \ \infty \ ;
%\end{align*} 
%}
 a condition trivially satisfied when $\mu$ has finite support, but also when $\mu$ has superexponential tail in the sense of \cite{Go}. With that assumption the following lemma can be found as Lemma 7.1 in \cite{GGPY}. We give here a more direct proof.

\begin{lemma}\label{30-05-19} Assume that \eqref{07-01-20} holds. It follows that 
\begin{equation*}\label{30-05-19a}
\sum_{s \in \Gamma} \mu(s) K(gs,\xi) \ = \ K(g,\xi)
\end{equation*}
for all $g \in \Gamma$ and all $\xi \in \partial_M \Gamma$.
\end{lemma}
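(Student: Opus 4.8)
The plan is to obtain the harmonicity relation at the boundary as a limit of the analogous first-step identity for the Green kernel inside $\Gamma$, with the summability hypothesis \eqref{07-01-20} furnishing exactly the domination needed to push the limit through the infinite sum.

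First I would record the renewal identity for the Green kernel. Decomposing the walk according to its first step gives $\mu^{n+1}_{g,h} = \sum_{s \in \Gamma} \mu(s)\mu^n_{gs,h}$, and summing over $n \geq 0$ (using $\mu^0_{g,h} = \delta_{g,h}$ and nonnegativity to interchange the sums) yields $G(g,h) = \delta_{g,h} + \sum_{s \in \Gamma} \mu(s) G(gs,h)$ for all $g,h \in \Gamma$. Dividing through by $G(e,h)$ converts this into the Martin-kernel form
\[
K(g,h) \ = \ \frac{\delta_{g,h}}{G(e,h)} \ + \ \sum_{s \in \Gamma} \mu(s) K(gs,h) \ .
\]

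Next I would fix $\xi \in \partial_M \Gamma$ and, using that the $\omega_\mu$-compactification is metrizable, choose a sequence $h_n \in \Gamma$ with $h_n \to \xi$. Since $\xi \notin \Gamma$ we have $h_n \neq g$ for all large $n$, so the $\delta$-term drops out, while the defining property of the Martin boundary gives $K(gs,h_n) \to K(gs,\xi)$ for each fixed $s$. The conclusion would then follow by letting $n \to \infty$ in the displayed identity, provided one may pass the limit through $\sum_{s}$.

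The main obstacle is precisely this interchange, and it is where \eqref{07-01-20} enters. Writing $K(gs,h) = \frac{G(gs,h)}{G(g,h)}\,K(g,h)$ and applying the Harnack inequality \eqref{harnach} together with the left-invariance of $d$ (so $d(gs,g) = d(s,e)$) gives $K(gs,h) \leq C^{d(s,e)}K(g,h)$; the upper bound in \eqref{01-05-20a} then yields the $h$-independent estimate $\mu(s)K(gs,h) \leq \frac{G(e,e)}{G(e,g)}\,\mu(s)C^{d(s,e)}$. By \eqref{07-01-20} the right-hand side is summable over $s$, so dominated convergence (for the counting measure on $\Gamma$) permits the interchange of limit and sum, and letting $n \to \infty$ completes the proof.
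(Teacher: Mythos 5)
Your proposal is correct and follows essentially the same route as the paper: the first-step (renewal) identity for the Green kernel, a sequence $h_n\to\xi$, and dominated convergence with the dominating function $\mu(s)C^{d(s,e)}\tfrac{G(e,e)}{G(e,g)}$ obtained from the Harnack inequality and \eqref{01-05-20a}. The only cosmetic difference is the order of operations (you establish the identity for all $h\in\Gamma$ first and then pass to the limit, while the paper pulls the limit inside the sum and then evaluates), and your bound $G(gs,h)/G(g,h)\leq C^{d(s,e)}$ is the same estimate the paper phrases via the cocycle factorization $K(gs,h_n)=K(s,g^{-1}h_n)K(g,h_n)$.
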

\begin{proof} Fix $\xi \in \partial_M \Gamma$ and choose a sequence $\{x_n\}$ in $\Gamma$ such that $\lim_{n \to \infty} x_n = \xi$ in $\overline{\Gamma}^{\omega_{\mu}}$. Fix $g \in \Gamma$. Using \eqref{01-05-20e} we find that
\begin{align*}
&\sum_{s \in \Gamma} \mu(s) K(gs,\xi) \ = \ \sum_{s \in \Gamma} \mu(s) \lim_{n \to \infty} K(gs,x_n ) \\
& = \sum_{s \in \Gamma} \mu(s) \lim_{n \to \infty} K(s,g^{-1}x_n )K(g,x_n) \ .
\end{align*} 
From \eqref{01-05-20a} and \eqref{harnach} we get that
$$
K(s,g^{-1}x_n)K(g,x_n) \ \leq \  K(g,x_n)C^{d(s,e)} \ \leq \  \frac{G(e,e)}{G(e,g)}  C^{d(s,e)} \ .
$$
Thanks to \eqref{07-01-20} we can therefore use Lebesgues  dominated convergence theorem to conclude that
\begin{align*}
&\sum_{s \in \Gamma} \mu(s) \lim_{n \to \infty} K(s,g^{-1}x_n )K(g,x_n) \ =  \  \lim_{n \to \infty} \sum_{s \in \Gamma} \mu(s) K(s,g^{-1}x_n )K(g,x_n) \\
& = \ \lim_{n \to \infty} \sum_{s \in \Gamma} \mu(s) K(gs,x_n) \ .
\end{align*} 
Since
\begin{align*}
&\sum_{s \in \Gamma} \mu(s) K(gs,x_n) \ = \  \frac{1}{G(e,x_n)}\sum_{s \in \Gamma} \mu_{g,gs} \sum_{m=0}^{\infty} \mu^m_{gs,x_n} \\
& = \ \frac{1}{G(e,x_n)} \sum_{m=1}^{\infty} \mu^m_{g,x_n} \ = \ \begin{cases} K(g,x_n) \ , & \ x_n \neq g \\ K(x_n,x_n) - {G(e,x_n)}^{-1} \ , & \ x_n = g \ , \end{cases}
\end{align*}
it follows that $\lim_{n \to \infty} \sum_{s \in \Gamma} \mu(s) K(gs,x_n) = K(g,\xi)$ because $x_n \neq g$ for all large $n$.
\end{proof}

\begin{defn}\label{12-01-20} An element $\xi \in \partial_M\Gamma$ is called a \emph{spine} when $K(g,\xi) = 1$ for all $g \in \Gamma$.
\end{defn}

We have no good reason for this name, except for the observation that an element of the Martin boundary with the above property in \cite{CS} was compared to an object in potential theory called a 'Lebesgue spine'. 

\begin{lemma}\label{12-01-20b} A spine $\xi_{0} \in \partial_M\Gamma$ is fixed by $\Gamma$; viz. $g \xi_0 = \xi_0$ for all $g \in \Gamma$.
\end{lemma}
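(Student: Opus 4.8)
The plan is to show that the spine property is preserved by the $\Gamma$-action, so that $g\xi_0$ is again a spine, and then to deduce $g\xi_0 = \xi_0$ from the fact that the Martin kernels separate the points of $\partial_M\Gamma$. This reduces the whole statement to a single application of the cocycle relation together with a separation argument built into the construction of the boundary.

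First I would fix $g \in \Gamma$ and specialize the cocycle relation \eqref{16-04-19x} to the boundary point $\xi = \xi_0$, namely
$$
K(x,g\xi_0)\,K(g^{-1},\xi_0) \ = \ K(g^{-1}x,\xi_0) \qquad \forall x \in \Gamma .
$$
Since $\xi_0$ is a spine we have $K(h,\xi_0) = 1$ for every $h \in \Gamma$; in particular $K(g^{-1},\xi_0) = 1$ and $K(g^{-1}x,\xi_0) = 1$. Substituting these into the displayed identity yields $K(x,g\xi_0) = 1$ for all $x \in \Gamma$, so that $g\xi_0$ is itself a spine. The only subtlety in this step is feeding in $g^{-1}$ and $g^{-1}x$ rather than $g$ and $x$, so that both invoked kernel values are genuinely covered by the spine hypothesis.

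It then remains to conclude $g\xi_0 = \xi_0$. Here I would invoke that, by the very construction of the $\omega_{\mu}$-compactification recalled in Section \ref{walks}, the continuous functions $\xi \mapsto K(x,\xi)$, $x \in \Gamma$, collectively separate the points of $\partial_M\Gamma$. Since $K(x,g\xi_0) = 1 = K(x,\xi_0)$ for all $x \in \Gamma$, the two boundary points $g\xi_0$ and $\xi_0$ cannot be distinguished by any of these separating functions, which forces $g\xi_0 = \xi_0$. As $g$ was arbitrary, $\xi_0$ is fixed by all of $\Gamma$.

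I do not expect a genuine obstacle here: the computation is immediate once \eqref{16-04-19x} is applied with the right arguments. The one non-computational ingredient, and thus the only thing one must be careful to have available, is the separation-of-points property of the Martin kernels; but this is exactly what underlies the identification $\partial_{\mu}\Gamma = \partial_M\Gamma$ in \eqref{29-05-19} and so may be used freely.
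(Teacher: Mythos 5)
Your proof is correct and is essentially the paper's own argument: both apply the cocycle relation \eqref{16-04-19x} at $\xi_0$ to conclude $K(x,g\xi_0)=1$ for all $x$, and then use that the Martin kernels separate points of $\partial_M\Gamma$. You merely spell out the substitution more explicitly than the paper does.
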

\begin{proof} It follows from \eqref{16-04-19x} that $K(h, g \xi_0) = 1$ for all $g,h \in \Gamma$ and hence that $g\xi_0 = \xi_0$ since the functions $K(h, \cdot), \ h \in \Gamma$, separate the points of $\partial_M \Gamma$. 
\end{proof}

\begin{prop}\label{07-01-20d} Assume that \eqref{07-01-20} holds and let $\beta \in \mathbb R$. A Borel probability measure $m$ on $\partial_M \Gamma$ is $K^{\beta}$-conformal if and only if one of the following three alternatives hold:
\begin{itemize}
\item[A)] $m$ is the Dirac measure $m = \delta_{\xi_0}$  concentrated on a spine $\xi_0 \in \partial_M \Gamma$. 
\item[B)] $\beta = 0$ and $m$ is $\Gamma$-invariant.
\item[C)] $\beta = 1$ and 
\begin{equation}\label{07-01-20f}
m(g^{-1} B) \ =  \ \int_{B} K(g,\xi) \ \mathrm{d} m(\xi)
\end{equation}
for all Borel subsets $B \subseteq \partial_M \Gamma$ and all $g \in \Gamma$.
\end{itemize}
 \end{prop}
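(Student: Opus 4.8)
The plan is to handle the two implications separately, putting essentially all the work into the forward direction.

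For the reverse implication I would simply check, in each of the three cases, the conformality criterion \eqref{12-01-20a} of Lemma~\ref{24-04-19a}. Case C) is that criterion verbatim with $\beta = 1$. In case B) the factor $K(g,\xi)^0 = 1$ collapses \eqref{12-01-20a} to $m(g^{-1}B) = m(B)$, which is exactly $\Gamma$-invariance. In case A), Lemma~\ref{12-01-20b} gives $g\xi_0 = \xi_0$, so for $m = \delta_{\xi_0}$ both sides of \eqref{12-01-20a} equal $\mathbf 1_B(\xi_0)$ (using $K(g,\xi_0) = 1$); this holds for every $\beta$.

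For the forward implication, assume $m$ is $K^\beta$-conformal. If $\beta = 0$ then \eqref{12-01-20a} says precisely that $m$ is $\Gamma$-invariant, giving B); if $\beta = 1$ it is precisely \eqref{07-01-20f}, giving C). The content of the proposition is that for $\beta \notin \{0,1\}$ the measure must be a point mass on a spine. The key inputs are the normalization $\int K(g,\xi)^\beta\,\mathrm dm(\xi) = 1$ for all $g$, coming from \eqref{21-01-20b} together with $e^{\beta D^\mu_g} = K(g^{-1},\cdot)^\beta$, and the harmonicity identity $\sum_{s\in\Gamma}\mu(s)K(gs,\xi) = K(g,\xi)$ of Lemma~\ref{30-05-19} (this is where the hypothesis \eqref{07-01-20} enters). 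Since $t\mapsto t^\beta$ is strictly convex for $\beta>1$ or $\beta<0$ and strictly concave for $0<\beta<1$, Jensen's inequality for the probability measure $\mu$ yields a pointwise one-sided comparison between $K(g,\xi)^\beta$ and $\sum_s\mu(s)K(gs,\xi)^\beta$. Integrating against $m$ and invoking the normalization at $g$ and at each $gs$, both sides integrate to $1$, so the pointwise inequality is an equality for $m$-a.e.\ $\xi$. Strictness of the convexity/concavity then forces $K(gs,\xi) = K(g,\xi)$ for every $s \in \supp\mu$, for $m$-a.e.\ $\xi$.

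It remains to propagate this into the spine condition. Intersecting the relevant full-measure sets over the countable group $\Gamma$, there is a Borel set of full $m$-measure on which $K(gs,\xi) = K(g,\xi)$ holds simultaneously for all $g \in \Gamma$ and all $s \in \supp\mu$. For such a $\xi$, starting from $K(e,\xi) = 1$ and inducting along a semigroup factorization $g = s_1\cdots s_k$ (possible since $\supp\mu$ generates $\Gamma$ as a semigroup) gives $K(g,\xi) = 1$ for all $g$, i.e.\ $\xi$ is a spine. Hence $m$ is concentrated on the Borel set of spines. Because the functions $K(h,\cdot)$ separate the points of $\partial_M\Gamma$, there is at most one spine, so this set is either empty or a singleton $\{\xi_0\}$; as $m$ lives on it, necessarily $m = \delta_{\xi_0}$, which is alternative A). I expect the main obstacle to be the bookkeeping in the Jensen step — securing the integrability that converts the pointwise inequality into an equality of integrals, and then extracting the almost-everywhere pointwise equality from strict convexity — while the final propagation to a spine is essentially combinatorial.
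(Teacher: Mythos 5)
Your argument is correct, and it reaches the conclusion by a genuinely different route than the paper. The paper argues by contraposition: assuming A) fails, it finds $g_0$ with $m\left(\{\xi : K(g_0,\xi)\neq 1\}\right)>0$, observes that $t\mapsto \int K(g_0,\xi)^t\,\mathrm dm(\xi)$ is then strictly convex \emph{in the exponent} $t$, averages against the weights $\mu^n_{e,h}$ (for an $n$ with $\mu^n_{e,g_0}>0$) to produce a strictly convex function $\Phi$ with $\Phi(0)=\Phi(\beta)=\Phi(1)=1$ (the value at $1$ coming from iterating Lemma~\ref{30-05-19}), and concludes $\beta\in\{0,1\}$. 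You instead fix $\beta\notin\{0,1\}$ and exploit strict convexity/concavity of the \emph{base} map $t\mapsto t^\beta$ via Jensen's inequality over the probability measure $\mu$ on $\Gamma$, pointwise in $\xi$; the normalization \eqref{21-01-20b} at $g$ and at each $gs$ (summed by Tonelli, which also settles the integrability worry you flag) forces a.e.\ equality in Jensen, hence $K(gs,\xi)=K(g,\xi)$ for $s\in\supp\mu$ a.e., and the semigroup generation propagates this to $K(g,\xi)=1$ for all $g$, so $m$ sits on the (unique) spine. The two proofs use the same two inputs --- the normalization and the harmonicity lemma --- but deploy convexity in transposed variables; yours has the advantage of producing the spine directly and avoiding the choice of $n$ and the iteration of Lemma~\ref{30-05-19}, while the paper's reduces everything to one strictly convex function of a single real variable taking the value $1$ at three points, which is arguably the quicker bookkeeping. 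Both are complete; your equality case of Jensen (a.e.\ constancy of $s\mapsto K(gs,\xi)$ on $\supp\mu$, whose common value must then be $K(g,\xi)$ by harmonicity) is the only step that needs stating carefully, and you have stated it correctly.
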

 \begin{proof} The 'if' part is trivial so we prove the 'only if' part. Let $m$ be a $K^{\beta}$-conformal measure and assume that A) does not hold. In view of Lemma \ref{24-04-19a} it suffices to show that $\beta \in \{0,1\}$. For each $g \in \Gamma$ set
 $$
 P_g = \left\{ \xi \in \partial_M \Gamma: \ K(g,\xi) = 1 \right\} \ .
 $$
 If $m(P_g) = 1$ for all $g \in \Gamma$ it follows that $m\left(\bigcap_g P_g\right) = 1$. Since the functions $K(g, \cdot)$ separate the points of $\partial_M \Gamma$ it follows that $ \bigcap_g P_g$ contains exactly one point which must be a spine on which $m$ is concentrated, which is impossible since we assume that A) does not holds. It follows that $m(P_{g_0}) < 1$ for some $g_0 \in \Gamma$. This implies that there is a Borel set $A \subseteq \partial_M\Gamma$ of positive $m$-measure such that $K(g_0, \xi) \neq 1$ for all $\xi \in A$. Let $x,y \in \mathbb R$, $x \neq y$ and $t \in ]0,1[$. It follows that
$$
K(g_0,\xi)^{t x + (1-t)y} \ < \ t K(g_0,\xi)^x + (1-t) K(g_0,\xi)^y
$$
for all $\xi \in A$ and hence 
\begin{align*}
&\int_{\partial_M \Gamma} K(g_0,\xi)^{t x + (1-t)y} \ \mathrm{d} m(\xi) \ \\
&=  \ \int_{\partial_M \Gamma \backslash A} K(g_0,\xi)^{t x + (1-t)y} \ \mathrm{d} m(\xi) \ + \  \int_{A} K(g_0,\xi)^{t x + (1-t)y} \ \mathrm{d} m(\xi) \\
& \ <   \int_{\partial_M \Gamma \backslash A}t K(g_0,\xi)^x + (1-t) K(g_0,\xi)^y \ \mathrm{d} m(\xi) \\
& \ \ \ \ \ \ \ \ \ \ \ \ \ \ \ + \ \int_{ A}t K(g_0,\xi)^x + (1-t) K(g_0,\xi)^y  \  \mathrm{d} m(\xi)\\
& =  \int_{\partial_M \Gamma}t K(g_0,\xi)^x + (1-t) K(g_0,\xi)^y \  \mathrm{d} m(\xi) \ .
\end{align*}
This shows that the function
$$
t \ \mapsto \ \int_{\partial_M \Gamma} K(g_0,\xi)^{t} \ \mathrm{d} m(\xi)
$$
is strictly convex on $\mathbb R$. Choose $n$ so large that $\mu^n_{e,g_0} > 0$. Such an $n$ exists because the support of $\mu$ generates $\Gamma$ by assumption. The function
$$
\Phi(t) \ = \ \sum_{h \in \Gamma} \mu^n_{e,h} \int_{\partial_M \Gamma} K(h,\xi)^{t} \ \mathrm{d} m(\xi)
$$
is then also strictly convex. Since $m$ is $K^{\beta}$-conformal, 
$$
\int_{\partial_M \Gamma} K(h,\xi)^{\beta} \ \mathrm{d} m(\xi)  \ = \ 1
$$
for all $h \in \Gamma$ by \eqref{21-01-20b}. As $\sum_{h \in \Gamma} \mu^n_{e,h} = 1$ we find therefore that 
$$
\Phi(\beta) =  \sum_{h \in \Gamma} \mu^n_{e,h} \int_{\partial_M \Gamma} K(h,\xi)^{\beta} \ \mathrm{d} m(\xi) \ = \sum_{h \in \Gamma} \mu^n_{e,h}  \ = \ 1 \ .
$$
For a similar reason $\Phi(0) = 1$. Iterated applications of Lemma \ref{30-05-19} show that 
\begin{align*}
&\Phi(1) =  \int_{\partial_M \Gamma} \sum_{h \in \Gamma} \mu^n_{e,h}  K(h,\xi) \ \mathrm{d} m(\xi)  = \ \int_{\partial_M \Gamma} K(e,\xi) \ \mathrm{d} m(\xi) \\
& = \  \int_{\partial_M \Gamma} 1 \ \mathrm{d} m(\xi)\ = \ 1 \ .
\end{align*}
Thus $\Phi(0) = \Phi( \beta) = \Phi(1) = 1$. Since $\Phi$ is strictly convex this implies that $\beta \in \{0,1\}$ which is what we needed to prove.

\end{proof}

%\begin{remark}\label{29-04-19} a) The first condition of A) in Proposition \ref{07-01-20d}, that $K(g,\xi_0) = 1$ for all $g \in \Gamma$ implies the other condition, that $\xi_0$ is fixed by $\Gamma$. 
%Indeed, it follows from \eqref{16-04-19x} that $K(h, g \xi_0) = 1$ for all $h \in \Gamma$ and hence that $g\xi_0 = \xi_0$ since the functions $K(h, \cdot), \ h \in \Gamma$, separate the points of $\partial_M \Gamma$. 
%\smallskip

%b) The existence of a point $\xi_0 \in \partial_M \Gamma$ such that \eqref{24-04-19c} holds is related to the structure of the Poisson boundary of $(\Gamma,\mu)$, cf. e.g. \cite{Wo2}. Recall that the Poisson boundary it trivial iff the harmonic measure ${}_e\nu$ is concentrated on a point in $\partial_M \Gamma$, cf. \cite{Wo2}. Therefore the triviality of the Poisson boundary implies the existence of a point $\xi_0 \in \partial_m \Gamma$ such that A) in Proposition  \ref{07-01-20d} holds. Conversely, if there is a point $\xi_0$ for which A) in Proposition  \ref{07-01-20d} holds and if this point lies in the minimal Martin boundary $\partial_m \Gamma$, then the triviality of Poisson boundary follows by Corollary 24.13 in \cite{Wo1}. However, it can happen that the Poisson boundary is non-trivial, and where there is a point $\xi_0$ outside of $\partial_m \Gamma$ such that A) in Proposition \ref{24-04-19b} holds. This is the case for certain Random walks on 'lamplighter groups', cf. Theorem 5.7 in \cite{BW}.

%\end{remark}

\begin{cor}\label{24-01-20}  Assume that \eqref{07-01-20} holds. The KMS spectrum of $\alpha^{D^{\mu}}$ is $\mathbb R$ when $\partial_M \Gamma$ contains a spine, and $\{0,1\}$ or $\{1\}$ when it does not. 
\end{cor}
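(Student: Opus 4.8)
The plan is to reduce the statement entirely to the existence question for $K^{\beta}$-conformal measures, for which \refprop{07-01-20d} already does the heavy lifting. The key input is the correspondence recorded in \reflemma{23-01-19ax} together with the discussion preceding it: restriction of a state to $C(\partial_M\Gamma)$ sends $\beta$-KMS states for $\alpha^{D^{\mu}}$ onto $K^{\beta}$-conformal measures, and every $K^{\beta}$-conformal measure $m$ lifts back to the $\beta$-KMS state $\omega_m$. Consequently there is a $\beta$-KMS state for $\alpha^{D^{\mu}}$ on $C(\partial_M\Gamma)\rtimes_r\Gamma$ if and only if there is a $K^{\beta}$-conformal measure on $\partial_M\Gamma$, so that the KMS spectrum of $\alpha^{D^{\mu}}$ equals the set of $\beta\in\mathbb R$ for which a $K^{\beta}$-conformal measure exists. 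Once this reformulation is in place the corollary is just a matter of reading off \refprop{07-01-20d}.

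First I would treat the case where $\partial_M\Gamma$ contains a spine $\xi_0$. Here I would observe that the Dirac measure $\delta_{\xi_0}$ is $K^{\beta}$-conformal for \emph{every} $\beta\in\mathbb R$: by \reflemma{12-01-20b} the spine is fixed by $\Gamma$, so $\delta_{\xi_0}(g^{-1}B)=\delta_{\xi_0}(B)$, while $K(g,\xi_0)=1$ gives $\int_B K(g,\xi)^{\beta}\,\mathrm d\delta_{\xi_0}=\delta_{\xi_0}(B)$, so the criterion of \reflemma{24-04-19a} holds for all $g$ and all $\beta$. This is alternative A) of \refprop{07-01-20d} and it supplies a $K^{\beta}$-conformal measure for each real $\beta$. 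Hence the KMS spectrum is all of $\mathbb R$ in this case.

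Next I would treat the case where $\partial_M\Gamma$ has no spine, so alternative A) of \refprop{07-01-20d} is vacuous. Then every $K^{\beta}$-conformal measure must arise from B) or C), which forces $\beta\in\{0,1\}$; thus the KMS spectrum is contained in $\{0,1\}$. The value $\beta=1$ is always attained, since by \refcor{25-04-19} the harmonic measure ${}_e\nu$ is $K$-conformal, i.e. satisfies C); so $1$ lies in the spectrum. Whether $0$ also lies in the spectrum is exactly the question, per B), of whether there exists a $\Gamma$-invariant Borel probability measure on $\partial_M\Gamma$, which may or may not occur. Therefore the spectrum is either $\{0,1\}$ or $\{1\}$, as claimed.

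The genuine mathematical content has already been absorbed into \refprop{07-01-20d}, whose strict-convexity argument is what rules out all $\beta\notin\{0,1\}$ in the absence of a spine; so I do not expect a serious obstacle here. The only points requiring care are bookkeeping ones: keeping the reduced-crossed-product correspondence of \reflemma{23-01-19ax} explicit so that existence of KMS states is literally equivalent to existence of conformal measures, noting that in the spine case the single Dirac measure already covers the entire real line, and being precise that in the spine-free case $\beta=1$ is unconditionally realized (harmonic measure) while $\beta=0$ is realized precisely when a $\Gamma$-invariant measure exists, which is why the statement leaves open the dichotomy between $\{0,1\}$ and $\{1\}$.
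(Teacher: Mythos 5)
Your argument is correct and is exactly the derivation the paper intends: the corollary is stated without proof as an immediate consequence of Proposition \ref{07-01-20d} via the correspondence between $\beta$-KMS states and $K^{\beta}$-conformal measures established in and around Lemma \ref{23-01-19ax}, with the Dirac measure at the spine covering all of $\mathbb R$ and the harmonic measure guaranteeing $\beta=1$ in the spine-free case. Your write-up simply makes these steps explicit, so there is nothing to add.
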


\section{Trivial Poisson boundary}

\begin{prop}\label{24-01-20a} Let $(\Gamma,\mu)$ be a random walk on $\Gamma$ for which \eqref{07-01-20} holds. Assume that the Martin boundary $\partial_M \Gamma$ contains a spine $\xi_{0}$.  For every $\beta \notin \{0,1\}$ there is an affine homeomorphism 
$\tau \ \mapsto \ \omega_{\tau}$ 
from the tracial state space $T(C^*_r(\Gamma))$ of $C^*_r(\Gamma)$ onto the simplex of $\beta$-KMS states for the flow $\alpha^{D^{\mu}}$ on $C\left(\partial_M \Gamma\right)\rtimes_r \Gamma$ such that
\begin{equation}\label{03-06-19bc}
\omega_{\tau}(fU_g) \ = \ f(\xi_0)\tau(U_g) 
\end{equation}
for all $f\in C(\partial_M \Gamma)$ and all $g \in \Gamma$.
\end{prop}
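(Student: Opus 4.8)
The plan is to obtain the whole homeomorphism from a single $*$-homomorphism manufactured from the spine. Since $\xi_0$ is fixed by $\Gamma$ by \reflemma{12-01-20b}, the evaluation $\ev_{\xi_0} : C(\partial_M \Gamma) \to \mathbb C$, $f \mapsto f(\xi_0)$, is a $\Gamma$-invariant character. I would look at the regular representation of $C(\partial_M \Gamma) \rtimes_r \Gamma$ induced from $\ev_{\xi_0}$; its GNS space is one-dimensional, so it acts on $\mathbb C \otimes \ell^2(\Gamma) = \ell^2(\Gamma)$, and a direct computation using that $\xi_0$ is fixed shows it sends $f$ to the scalar $f(\xi_0)$ and $U_g$ to left translation $\lambda_g$. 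I record two consequences. First, it defines a surjective unital $*$-homomorphism $\Phi : C(\partial_M \Gamma) \rtimes_r \Gamma \to C^*_r(\Gamma)$ with $\Phi(fU_g) = f(\xi_0) U_g$. Second, on scalar coefficients it yields $\|\sum_g c_g U_g\| = \|\sum_g c_g \lambda_g\|$, so the $U_g$ generate a canonical copy of $C^*_r(\Gamma)$ inside the crossed product; writing $s$ for the resulting embedding, $\Phi \circ s = \id$.

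With $\Phi$ and $s$ in hand the two directions are short. Given $\tau \in T(C^*_r(\Gamma))$ I set $\omega_\tau := \tau \circ \Phi$, which is automatically a state and satisfies $\omega_\tau(fU_g) = f(\xi_0)\tau(U_g)$, i.e. \eqref{03-06-19bc}. To check it is a $\beta$-KMS state I evaluate both sides of the KMS identity on $f_1U_{g_1}f_2U_{g_2}$: after multiplying out and evaluating the coefficient functions at the fixed point $\xi_0$, the extra factor on the right-hand side is $e^{\beta D^{\mu}_{g_1}}(\xi_0) = K(g_1^{-1},\xi_0)^{\beta} = 1$, so it drops out and the identity collapses to $\tau(U_{g_1g_2}) = \tau(U_{g_2g_1})$, which holds because $\tau$ is a trace. (This step works for every $\beta$; the restriction $\beta \notin\{0,1\}$ enters only in the converse.)

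For surjectivity I take an arbitrary $\beta$-KMS state $\omega$ with $\beta\notin\{0,1\}$. By the computation preceding \reflemma{23-01-19ax} its restriction to $C(\partial_M\Gamma)$ is a $K^{\beta}$-conformal measure, and by \refprop{07-01-20d} the only such measure is $\delta_{\xi_0}$; thus $\omega(f) = f(\xi_0)$. A Cauchy--Schwarz estimate then pins down $\omega$ off the diagonal: for $f$ with $f(\xi_0)=0$ one has $|\omega(fU_g)|^2 \le \omega(|f|^2)\,\omega(U_g^*U_g) = |f(\xi_0)|^2 = 0$, whence $\omega(fU_g) = f(\xi_0)\omega(U_g)$ for all $f,g$. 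Setting $\tau := \omega\circ s$ gives a state on $C^*_r(\Gamma)$, and the same KMS computation read backwards (with $f_1=f_2=1$) forces $\tau(U_{g_1g_2}) = \tau(U_{g_2g_1})$, so $\tau$ is tracial. Since $\omega_\tau$ and $\omega$ now agree on the dense subalgebra $\Span\{fU_g\}$, they coincide, so $\omega = \omega_\tau$ and the map is onto.

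Injectivity follows by putting $f=1$ in \eqref{03-06-19bc}, since a trace is determined by the values $\tau(U_g)=\omega_\tau(U_g)$; affinity is immediate from the formula; and both $\tau\mapsto\omega_\tau$ and its inverse $\omega\mapsto\omega\circ s$ are weak-$*$ continuous, so the continuous affine bijection between these weak-$*$ compact convex sets is an affine homeomorphism. The one genuinely delicate point is the first paragraph: reduced crossed products are not functorial for quotients in general, so it is essential that $\xi_0$ is a \emph{global} fixed point — this is exactly what makes $\ev_{\xi_0}$ an invariant character and produces the concrete copy of $C^*_r(\Gamma)$ on $\ell^2(\Gamma)$ realizing both $\Phi$ and $s$; without a fixed point there would be no canonical $C^*_r(\Gamma)$ to map onto, and the norm identity underlying $s$ would fail.
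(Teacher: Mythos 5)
Your proof is correct, but it takes a genuinely different route from the paper's. The paper constructs $\omega_{\tau}$ by restricting $\tau$ to the amenable radical $R(\Gamma)$, invoking Neshveyev's Theorem 2.1 in \cite{N} to build a trace on $C(\partial_M\Gamma)\rtimes_r R(\Gamma)$, and using the Breuillard--Kalantar--Kennedy--Ozawa theorem that traces on $C^*_r(\Gamma)$ vanish off $R(\Gamma)$; for surjectivity it passes to the full crossed product and applies Neshveyev's Theorem 1.3. You instead build everything from the single split surjection $\Phi : C(\partial_M\Gamma)\rtimes_r\Gamma \to C^*_r(\Gamma)$ induced by the invariant character $\ev_{\xi_0}$, define $\omega_\tau = \tau\circ\Phi$, and replace the groupoid machinery in the converse direction by the classification of $K^{\beta}$-conformal measures (Proposition \ref{07-01-20d}) together with a Cauchy--Schwarz estimate and the trace identity extracted from the KMS condition at the fixed point. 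Your route is more elementary and self-contained: it avoids \cite{N} and \cite{BKKO} entirely, at the cost of having to justify that the regular representation induced from the non-faithful representation $\ev_{\xi_0}$ is still weakly contained in the defining representation of the reduced crossed product. You correctly flag this as the delicate point; the standard argument (compare $\mathrm{Ind}(\ev_{\xi_0})$ with $\mathrm{Ind}(\ev_{\xi_0}\oplus\pi)$ for $\pi$ faithful, using that the reduced norm is independent of the choice of faithful representation of $C(\partial_M\Gamma)$) should be spelled out, since it is the only place where the existence of $\Phi$ could conceivably fail. Everything else --- the KMS verification collapsing to traciality because $K(g^{-1},\xi_0)^{\beta}=1$, the use of Proposition \ref{07-01-20d} to force $\omega|_{C(\partial_M\Gamma)} = \delta_{\xi_0}$ when $\beta\notin\{0,1\}$, and the compactness argument for the homeomorphism --- is sound and matches the logical skeleton of the paper's proof even though the technical inputs differ.
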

\begin{proof} Let $\tau \in T(C^*_r(\Gamma))$. Let $R(\Gamma)$ be the amenable radical of $\Gamma$, i.e. $R(\Gamma)$ is the largest amenable normal subgroup of $\Gamma$. Then $C(\partial_M \Gamma) \rtimes R(\Gamma) = C(\partial_M \Gamma) \rtimes_r R(\Gamma)$ since $R(\Gamma)$ is amenable. By considering $ \left(\delta_{\xi_0}, \tau|_{C^*(R(\Gamma))}\right)$ as a $\delta_{\xi_0}$-measurable field of traces it follows from Theorem 2.1 in \cite{N} that there is a trace state $\omega'$ on $C(\partial_M \Gamma) \rtimes_r R(\Gamma)$ such that
$$
\omega'(f U_g) \ = \ f(\xi_0)\tau(U_g) \  
$$
for all $f \in C(\partial_M\Gamma)$ and all $g \in R(\Gamma)$. By Theorem 4.1 in \cite{BKKO} $\tau(U_g) = 0$ for all $g \notin R(\Gamma)$, so when we set $\omega_{\tau} = \omega' \circ p$ where  $p : C\left(\partial_M \Gamma\right)\rtimes_r \Gamma \to C\left(\partial_M \Gamma\right)\rtimes_r R(\Gamma)$ is the canonical conditional expectation we obtain a state $\omega_{\tau}$ such that \eqref{03-06-19bc} holds. It follows from \eqref{03-06-19bc} that\begin{align*}
& \omega_{\tau}\left(fU_gf_1U_{g_1}\right) \ = \ \omega_{\tau} \left(ff_1 \circ \phi_{g^{-1}} U_gU_{g_1}\right) \ = \ f(\xi_0)f_1(g^{-1}\xi_0) \tau\left(U_gU_{g_1}\right) \ 
\end{align*}
and
\begin{align*}
& \omega_{\tau}\left(f_1U_{g_1}\alpha^{D^{\mu}}_{i\beta}(fU_g)\right) \ = \ \omega_{\tau}\left(f_1U_{g_1}f U_g e^{\beta D^{\mu}_g}\right) \\ 
& = \  \omega_{\tau}\left( f_1 f\circ \phi^{-1}_{g_1}  e^{\beta D^{\mu}_g \circ \phi^{-1}_{g}\circ \phi^{-1}_{g_1}} U_{g_1}U_g\right) \\
& = \ f_1(\xi_0) f(g_1^{-1}\xi_0) K( g^{-1}, g^{-1} g_1^{-1} \xi_0)^{\beta} \tau(U_{g_1}U_g) \ 
\end{align*}
Therefore the fact that $\xi_0$ is a spine, and in particular fixed by $\Gamma$, and $\tau$ is a trace imply that $\omega_{\tau}$ is a $\beta$-KMS state for $\alpha^{D^{\mu}}$. Hence the map of $\tau \mapsto \omega_{\tau}$ is well-defined. It is obviously injective so it remains only to show that it is also surjective. Consider therefore a $\beta$-KMS state $\omega$ for $\alpha^{D^{\mu}}$. By composing $\omega$ with the canonical surjection $q: C(\partial_M\Gamma) \rtimes \Gamma \to C(\partial_M\Gamma) \rtimes_r \Gamma$ we obtain the $\beta$-KMS state $\omega \circ q$ on $C(\partial_M \Gamma) \rtimes \Gamma$ for which we can apply \cite{N}.  Since we assume that $\beta \notin \{0,1\}$ it follows from Proposition \ref{07-01-20d} that every $K^{\beta}$-conformal measure is concentrated on $\xi_0$. Hence Theorem 1.3 in \cite{N} tells us that there is a trace state $\tau'$ on $C^*( \Gamma)$ such that
\begin{equation}\label{03-06-19ac}
\omega \circ q(fU_g) \ = \ f(\xi_0)\tau'(U_g)
\end{equation}
for all $f \in C(\partial_M\Gamma)$ and all $g \in \Gamma$. Since $\xi_0$ is fixed by $\Gamma$ every representation of $\Gamma$ extends to a representation of $ C(\partial_M\Gamma) \rtimes \Gamma$ implying that $C^*(\Gamma) \subseteq  C(\partial_M \Gamma) \rtimes \Gamma$. The restriction of $q$ to $C^*(\Gamma)$ is the canonical surjection $C^*(\Gamma) \to C^*_r(\Gamma)$ and it follows therefore from \eqref{03-06-19ac} that $\tau' $ factorises through $C^*_r(\Gamma)$. This shows that the map under consideration is also surjective.  
\end{proof}

As examples in Section \ref{09-03-20c} will show there can in general be many $K$-conformal measures when the Martin boundary contains a spine, but as we show next this requires that the Poisson boundary is non-trivial.

 A Borel probability measure $m$ on $\partial_M \Gamma$ is \emph{$\mu$-stationary} when
$$
\sum_{g \in \Gamma} \mu(g)m(g^{-1}B) = m(B)
$$
for all Borel sets $B \subseteq \partial_M \Gamma$.% We note that it follows from Proposition \ref{07-01-20d} that all $e^{\beta D^{\mu}}$-conformal measures, regardless of $\beta$, are $\mu$-stationary measures, provided \eqref{07-01-20} holds.

\begin{lemma}\label{23-05-19ax} Assume that \eqref{07-01-20} holds. The following conditions are equivalent.
\begin{itemize}
\item[1)] The Poisson boundary of $(\Gamma,\mu)$ is trivial.
\item[2)] Every $\mu$-stationary Borel measure on $\partial_M \Gamma$ is $\Gamma$-invariant. 
\item[3)] Every $K$-conformal measure is $\Gamma$-invariant.
\item[4)] There is a point $\xi_0 \in \partial_m \Gamma$ such that the Dirac measure $\delta_{\xi_0}$ is the only $e^{\beta D^{\mu}}$-conformal measure on $\partial_M \Gamma$ for all $\beta \neq 0$.
\item[5)]  The minimal Martin boundary $\partial_m \Gamma$ contains a spine. 
\end{itemize}

\end{lemma}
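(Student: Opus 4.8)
The plan is to prove the five conditions equivalent by running a single cycle of implications $(1)\Rightarrow(2)\Rightarrow(3)\Rightarrow(4)\Rightarrow(5)\Rightarrow(1)$, after recording two preliminary observations. The first is that every $K$-conformal measure is automatically $\mu$-stationary: applying Lemma \ref{30-05-19} with $g=e$ gives $\sum_{s}\mu(s)K(s,\xi)=K(e,\xi)=1$, so for a $K$-conformal $m$ one computes, using \eqref{07-01-20f},
\[
\sum_{g}\mu(g)\,m(g^{-1}B)=\int_{B}\sum_{g}\mu(g)K(g,\xi)\,\mathrm{d}m(\xi)=\int_{B}1\,\mathrm{d}m=m(B).
\]
The second is that a $\Gamma$-invariant $K$-conformal measure must be the Dirac mass at a spine: combining invariance $m(g^{-1}B)=m(B)$ with \eqref{07-01-20f} forces $K(g,\cdot)=1$ $m$-almost everywhere for every $g$, so $m$-a.e.\ point is a spine; since the functions $K(g,\cdot)$ separate the points of $\partial_M\Gamma$ there is at most one spine $\xi_0$, whence $m=\delta_{\xi_0}$. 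I would also recall from Corollary \ref{25-04-19} that the harmonic measure ${}_e\nu$ is always $K$-conformal and, by Theorem \ref{07-01-20xx}, concentrated on $\partial_m\Gamma$.

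With these in hand the implications are short. For $(1)\Rightarrow(2)$ I would use that triviality of the Poisson boundary is equivalent to the Liouville property that every bounded $\mu$-harmonic function is constant (cf.\ \cite{Ka1}); given a $\mu$-stationary $m$ and $\psi\in C(\partial_M\Gamma)$, the function $g\mapsto\int\psi(g\xi)\,\mathrm{d}m(\xi)$ is bounded and $\mu$-harmonic, since stationarity is precisely $\sum_{s}\mu(s)\,s_{*}m=m$, hence it is constant, and testing against all $\psi$ yields $g_{*}m=m$, i.e.\ $m$ is $\Gamma$-invariant. The implication $(2)\Rightarrow(3)$ is immediate from the first observation. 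For $(3)\Rightarrow(4)$: by $(3)$ the measure ${}_e\nu$ is $\Gamma$-invariant, so by the second observation a spine $\xi_0\in\partial_m\Gamma$ exists and ${}_e\nu=\delta_{\xi_0}$; every $K$-conformal measure is then likewise $\delta_{\xi_0}$, settling the case $\beta=1$, while for $\beta\notin\{0,1\}$ Proposition \ref{07-01-20d} forces any $K^{\beta}$-conformal measure to be a Dirac mass on a spine, hence again $\delta_{\xi_0}$. The step $(4)\Rightarrow(5)$ is trivial, as $(4)$ provides a spine in $\partial_m\Gamma$. Finally, for $(5)\Rightarrow(1)$ I note that if $\xi_0\in\partial_m\Gamma$ is a spine then $\delta_{\xi_0}$ is a probability measure on $\partial_m\Gamma$ with $\int K(g,\xi)\,\mathrm{d}\delta_{\xi_0}=K(g,\xi_0)=1$ for all $g$, so by uniqueness of the representing measure (Theorem 7.53 in \cite{Wo2}, as used already in Corollary \ref{25-04-19}) we get ${}_e\nu=\delta_{\xi_0}$; thus ${}_e\nu$ is a point mass and the Poisson boundary is trivial.

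The main obstacle is the implication $(1)\Rightarrow(2)$, the only place where the probabilistic meaning of a trivial Poisson boundary enters rather than formal manipulation of conformal measures. The crux is to convert the Liouville property into invariance of an \emph{arbitrary} stationary measure, and the key device is the Poisson-transform-type function $g\mapsto\int\psi(g\xi)\,\mathrm{d}m(\xi)$, whose harmonicity is a direct consequence of stationarity and whose boundedness lets the Liouville property collapse it to a constant. Everything else is bookkeeping resting on the two preliminary observations, on Proposition \ref{07-01-20d}, and on the uniqueness of the minimal Martin representation.
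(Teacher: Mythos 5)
Your proof is correct and follows the same cyclic decomposition $1)\Rightarrow 2)\Rightarrow 3)\Rightarrow 4)\Rightarrow 5)\Rightarrow 1)$ as the paper, with the same two workhorse observations (stationarity of $K$-conformal measures via Lemma \ref{30-05-19}, and the fact that an invariant $K$-conformal measure is the Dirac mass at the unique spine). The only step where you genuinely diverge is $1)\Rightarrow 2)$: the paper fixes a Borel set $B$ with $m(B)>0$, notes that $g\mapsto m(g^{-1}B)/m(B)$ is a bounded $\mu$-harmonic function, and applies the Poisson integral representation (Theorem 7.61 in \cite{Wo2}) to write it as $\int K(g,\xi)\varphi_B(\xi)\,\mathrm{d}{}_e\nu(\xi)$ with ${}_e\nu$ concentrated on the spine, so the function is constant; you instead invoke the Liouville property directly on the Poisson transform $g\mapsto\int\psi(g\xi)\,\mathrm{d}m(\xi)$ of a continuous $\psi$ and conclude $g_*m=m$ by Riesz. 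Both are sound; your version avoids the representation theorem at the cost of explicitly using the (standard, but not restated in the paper) equivalence of trivial Poisson boundary with the Liouville property, while the paper's version stays entirely within the Martin-boundary formalism it has already set up.
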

\begin{proof}
1) $\Rightarrow$ 2): Let $m$ be a $\mu$-stationary Borel measure and consider a Borel set $B$ such that $m(B) \neq 0$. Since
$$
\sum_{h \in \Gamma} \mu(h) m\left(h^{-1}g^{-1} B\right) \ = \ m(g^{-1}B) \ ,
$$
the function $\Gamma \ni g \mapsto \frac{m(g^{-1}B)}{m(B)}$ is $\mu$-harmonic. Since it is also bounded the Poisson integral formula gives a bounded Borel function $\varphi_B$ on $\partial_M \Gamma$ such that
$$
m(g^{-1}B) \ = \ m(B) \int_{\partial_M \Gamma} K(g,\xi)\varphi_B(\xi) \ \mathrm{d} {}_e\nu(\xi) \ 
$$
for all $g \in \Gamma$, cf. Theorem 7.61 in \cite{Wo2}. That the Poisson boundary is trivial means that the measure ${}_e\nu$ is concentrated on a spine $\xi_0$. Therefore
$$
m(g^{-1}B) \ = \ m(B)  \varphi_B(\xi_0) \  
$$
for all $g \in \Gamma$. By taking $g =e$ we find that $\varphi_B(\xi_0) = 1$. This shows that $m(g^{-1}B) = m(B)$ for all $g \in \Gamma$ when $m(B) > 0$. The same is true when $m(B)=0$ because if $m(g^{-1}B) > 0$ it follows that $m(B) = m(gg^{-1}B) = m(g^{-1}B)$, a contradiction. 2) $\Rightarrow$ 3): Let $m$ be a $K$-conformal measure, and let $B \subseteq \partial_M\Gamma$ a Borel set. Using Lemma \ref{30-05-19} for the second identity we find that
\begin{align*}
&\sum_{g \in \Gamma} \mu(g) m(g^{-1}B) \ = \ \sum_{g \in \Gamma} \mu(g) \int_{B} K(g,\xi) \ \mathrm{d}m(\xi)\\
& = \ \int_B K(e,\xi) \ \mathrm{d} m(\xi)\  = \ m(B) \ ,
\end{align*}
proving that $m$ is $\mu$-stationary and hence $\Gamma$-invariant since we assume 2). 3) $\Rightarrow$ 4): Let $m$ be a $K$-conformal measure; there is one since the harmonic measure ${}_e\nu$ is $K$-conformal. Since $m$ is $\Gamma$-invariant it follows from \eqref{12-01-20a} that
$$
m(B) = \int_B K(g,\xi) \ \mathrm{d} m(\xi) 
$$
for all Borel sets $B \subseteq \partial_M \Gamma$ and all $g \in \Gamma$. This implies that $m$ is concentrated on a spine $\xi_0$. Note that $\xi_0 \in \partial_m\Gamma$ since ${}_e\nu$ is concentrated on $\partial_m\Gamma$. Hence 4) follows from Proposition \ref{07-01-20d} because a spine is unique. 4) $\Rightarrow$ 5): Since $\delta_{\xi_0}$ is $K$-conformal it follows that
$$
1 \ = \ \int_{\partial_M\Gamma} K(g^{-1},\xi) \ \mathrm{d}\delta_{\xi_0} = K(g^{-1},\xi_0)
$$
for all $g \in \Gamma$; i.e. $\xi_0$ is a spine and hence 5) holds. 5) $\Rightarrow$ 1): Let $\xi_0$ be a spine in $\partial_m\Gamma$. Then $\delta_{\xi_0}$ is a Borel probability measure concentrated on $\partial_m\Gamma$ representing the harmonic function $1$, and hence $\delta_{\xi_0} =  {}_e\nu$ by Theorem 7.53 in \cite{Wo2}, i.e. the Poisson boundary is trivial.
\end{proof}

%Let $T(A)$ denote the tracial state space of a unital $C^*$-algebra $A$.

\begin{thm}\label{03-06-19} Let $(\Gamma,\mu)$ be a random walk on $\Gamma$ for which \eqref{07-01-20} holds. Assume that the Poisson boundary of $(\Gamma,\mu)$ is trivial and let $\xi_0 \in \partial_m \Gamma$ be the point supporting the harmonic measure. For every $\beta \neq 0$ there is an affine homeomorphism 
$\tau \ \mapsto \ \omega_{\tau}$ 
from the tracial state space $T(C^*_r(\Gamma))$ of $C^*_r(\Gamma)$ onto the simplex of $\beta$-KMS states for the flow $\alpha^{D^{\mu}}$ on $C\left(\partial_M \Gamma\right)\rtimes_r \Gamma$ such that
\begin{equation*}\label{03-06-19b}
\omega_{\tau}(fU_g) \ = \ f(\xi_0)\tau(U_g) 
\end{equation*}
for all $f\in C(\partial_M \Gamma)$ and all $g \in \Gamma$.
\end{thm}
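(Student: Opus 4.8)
The plan is to reduce everything to the argument of Proposition~\ref{24-01-20a} and to isolate the single new ingredient needed for the value $\beta = 1$. First I note that triviality of the Poisson boundary is equivalent, by Lemma~\ref{23-05-19ax}, to the existence of a spine $\xi_0 \in \partial_m\Gamma$, and that this $\xi_0$ is exactly the point supporting the harmonic measure. In particular $\partial_M\Gamma$ contains a spine, so for every $\beta \notin \{0,1\}$ the assertion is literally Proposition~\ref{24-01-20a}. Thus the only genuinely new case is $\beta = 1$, and the strategy is to run the same construction and the same surjectivity argument, replacing the one step that used $\beta \notin \{0,1\}$ by the stronger uniqueness statement available under triviality of the Poisson boundary.

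For the construction of $\omega_\tau$ I would repeat the recipe in Proposition~\ref{24-01-20a}: given $\tau \in T(C^*_r(\Gamma))$, view $(\delta_{\xi_0}, \tau|_{C^*(R(\Gamma))})$ as a $\delta_{\xi_0}$-measurable field of traces over the amenable radical $R(\Gamma)$, obtain a trace $\omega'$ on $C(\partial_M\Gamma)\rtimes_r R(\Gamma)$ from Theorem~2.1 of \cite{N}, use Theorem~4.1 of \cite{BKKO} to see that $\tau(U_g) = 0$ for $g \notin R(\Gamma)$, and set $\omega_\tau = \omega' \circ p$ for the canonical conditional expectation $p$ onto $C(\partial_M\Gamma)\rtimes_r R(\Gamma)$. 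The verification that $\omega_\tau$ is a $\beta$-KMS state uses only that $\xi_0$ is a spine, hence fixed by $\Gamma$ (Lemma~\ref{12-01-20b}), so that $K(g^{-1}, g^{-1}g_1^{-1}\xi_0) = 1$ and $f(g\xi_0) = f(\xi_0)$, together with the trace property of $\tau$; none of this depends on the value of $\beta$, so the computation of Proposition~\ref{24-01-20a} goes through unchanged for $\beta = 1$. Injectivity, affineness, and weak${}^*$-continuity of $\tau \mapsto \omega_\tau$ are immediate from the explicit formula \eqref{03-06-19bc}.

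For surjectivity I would take a $\beta$-KMS state $\omega$, compose with the canonical surjection $q : C(\partial_M\Gamma)\rtimes \Gamma \to C(\partial_M\Gamma)\rtimes_r \Gamma$ to obtain a $\beta$-KMS state $\omega\circ q$ on the full crossed product, and apply Neshveyev's disintegration (Theorem~1.3 of \cite{N}). Here is the one place where the hypothesis enters: instead of invoking Proposition~\ref{07-01-20d} (which forces conformality onto $\xi_0$ only for $\beta \notin \{0,1\}$), I would invoke condition~4) of Lemma~\ref{23-05-19ax}, which, under triviality of the Poisson boundary, guarantees that $\delta_{\xi_0}$ is the unique $e^{\beta D^{\mu}}$-conformal measure for every $\beta \neq 0$, in particular for $\beta = 1$. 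Neshveyev's theorem then yields a trace $\tau'$ on $C^*(\Gamma)$ with $\omega\circ q(fU_g) = f(\xi_0)\tau'(U_g)$, and since $\xi_0$ is $\Gamma$-fixed the inclusion $C^*(\Gamma) \subseteq C(\partial_M\Gamma)\rtimes \Gamma$ makes $q$ restrict to the canonical surjection $C^*(\Gamma) \to C^*_r(\Gamma)$, so $\tau'$ descends to the desired $\tau \in T(C^*_r(\Gamma))$.

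The conceptual obstacle, and the reason this is not a formality, is exactly at $\beta = 1$: in general the set of $K$-conformal (that is, $e^{D^{\mu}}$-conformal) measures is a large simplex, and the harmonic measure ${}_e\nu$ is $K$-conformal by Corollary~\ref{25-04-19}, so a priori there could be many $1$-KMS states beyond those coming from traces. The whole content is therefore that triviality of the Poisson boundary collapses this simplex to the single point $\delta_{\xi_0}$; once Lemma~\ref{23-05-19ax} delivers that collapse, the $\beta = 1$ case becomes formally identical to the $\beta \notin \{0,1\}$ case and the proof closes.
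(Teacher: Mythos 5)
Your proposal is correct and follows exactly the paper's own route: reduce to Proposition~\ref{24-01-20a} for $\beta \notin \{0,1\}$, and for $\beta = 1$ use the implication 1) $\Rightarrow$ 4) of Lemma~\ref{23-05-19ax} to see that $\delta_{\xi_0}$ is the unique $K$-conformal measure, so that the construction and the Neshveyev-based surjectivity argument of Proposition~\ref{24-01-20a} go through unchanged. The paper states this in two lines; you have merely written out the details.
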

\begin{proof} By Proposition \ref{24-01-20a} it suffices here to handle the case $\beta =1$. But by 1) $\Rightarrow$ 4) of Lemma \ref{23-05-19ax} the Dirac measure $\delta_{\xi_0}$ is the only $K$-conformal measure and hence the arguments of Proposition \ref{24-01-20a} are valid also when $\beta =1$. 
\end{proof}

%The $0$-KMS states for $\alpha^{D^{\mu}}$ are the trace states of $C(\partial_M \Gamma) \rtimes_r \Gamma$. It follows from \cite{BKKO} that traces on $C(\partial_M \Gamma) \rtimes_r \Gamma$ are concentrated on $C(\partial_M \Gamma) \rtimes_r R(\Gamma)$ where $R(\Gamma)$ is the amenable radical of $\Gamma$. Since $C(\partial_M \Gamma) \rtimes_r R(\Gamma) = C(\partial_M \Gamma) \rtimes R(\Gamma)$ the traces states of $C(\partial_M \Gamma) \rtimes_r R(\Gamma)$, and hence of $C(\partial_M \Gamma) \rtimes_r \Gamma$, can be described by using Theorem 2.1 in \cite{N}; at least in principle.

Much work has been concerned with the question about triviality or non-triviality of the Poisson boundary for random walks on groups; see \cite{KV} and \cite{BE} for some of the first and some of the most recent results, respectively. In particular, it follows from the Corollary to Theorem 4.2 in \cite{KV} that the Poisson boundary is never trivial for a random walk on a non-amenable group. Hence Theorem \ref{03-06-19} gives no information when $\Gamma$ is not amenable. In contrast, when $\Gamma$ is a finitely generated group of polynomial growth the Poisson boundary is trivial for all random walks, cf. page 466 in \cite{KV}. Hence Theorem \ref{03-06-19} applies to all groups of polynomial growth, including in particular all nilpotent groups.

\section{Non-elementary hyperbolic groups}

Beyond the nilpotent groups it seems that in relation to random walks the hyperbolic or word-hyperbolic groups of Gromow are the most studied and best understood. For a quick introduction to hyperbolic groups we refer to the survey by Kapovich and Benakli, \cite{KB}. When $\Gamma$ is a non-elementary hyperbolic group and $(\Gamma , \mu)$ is a random walk on $\Gamma$ with finite support (i.e. $\{g \in \Gamma: \ \mu(g) > 0 \}$ is finite) it was shown by Ancona in \cite{A} that the Martin boundary is $\Gamma$-equivariantly homeomorphic to the Gromov boundary $\partial \Gamma$ of $\Gamma$. %More recently Gou\"ezel, \cite{Go}, extended this fact to symmetric random walks $(\Gamma,\mu)$ with superexponential tails, which means that $\sum_{g \in \Gamma} \mu(g)L^{d(e,g)} < \infty$ for every $L > 1$ and every word-metric $d$, and he demonstrated that in general the two boundaries $\partial_M \Gamma$ and $\partial \Gamma$ are not the same. It is therefore likely that much of the following can be extended to random walks with superexponential tails, but in order to travel safely we restrict in the following the attention to random walks with finite supports. 
We can then utilize some of the many results available on the action of $\Gamma$ on its Gromow boundary. The most important is the following, a result of Woess, \cite{Wo3}; see also Theorem 7.6 in \cite{Ka2}.

\begin{thm}\label{03-06-19d} Let $(\Gamma,\mu)$ be a random walk with finite support on the non-elementary hyperbolic group $\Gamma$. There is a unique $\mu$-stationary Borel probability measure on the Gromow boundary $\partial \Gamma$.
\end{thm}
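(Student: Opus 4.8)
The plan is to prove both existence and uniqueness of the $\mu$-stationary measure on $\partial\Gamma \cong \partial_M\Gamma$. Existence is free: by Corollary~\ref{25-04-19} the harmonic measure ${}_e\nu$ is $K$-conformal, and the computation in the proof of $3)\Rightarrow 4)$ in Lemma~\ref{23-05-19ax} (using Lemma~\ref{30-05-19}) shows directly that any $K$-conformal measure is $\mu$-stationary; hence ${}_e\nu$ is a $\mu$-stationary measure. So the entire content of the theorem is \emph{uniqueness}, and everything reduces to ruling out a second $\mu$-stationary measure.

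First I would recall the relationship between $\mu$-stationary measures and the Poisson boundary. For a random walk with finite support on a non-elementary hyperbolic group, the Poisson boundary is realized by $(\partial\Gamma, {}_e\nu)$; this is the work of Kaimanovich (see \cite{Ka2}) building on Ancona's identification of the Martin and Gromov boundaries. The standard dictionary says that $\mu$-stationary measures on a $\Gamma$-space correspond to positive bounded $\mu$-harmonic functions via the Poisson integral: as in the proof of $1)\Rightarrow 2)$ of Lemma~\ref{23-05-19ax}, given a stationary measure $m$ and a Borel set $B$, the function $g \mapsto m(g^{-1}B)/m(B)$ is a bounded $\mu$-harmonic function, so by the Poisson representation (Theorem~7.61 in \cite{Wo2}) it is an integral against ${}_e\nu$ of the Martin kernel. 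The plan is to pin down $m$ entirely in terms of ${}_e\nu$ this way.

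The key geometric input is that $\Gamma$ acts on $(\partial\Gamma, {}_e\nu)$ with a strong boundary property: the action is a \emph{boundary} in the sense that it is amenable and the diagonal action on $\partial\Gamma \times \partial\Gamma$ is metrically ergodic, or equivalently ${}_e\nu$ is a $\mu$-boundary that is maximal. For hyperbolic groups with finite-support walks the harmonic measure is non-atomic and has full support, and the $\Gamma$-action is \emph{strongly proximal} and minimal. The central step is to exploit strong proximality: given any $\mu$-stationary $m$, the sequence of translates $g_1\cdots g_n$ of the random walk converges almost surely to a boundary point, and the martingale/contraction argument shows the conditional measures $(g_1\cdots g_n)_* m$ converge to a Dirac mass at the limit point, for ${}_e\nu$-almost every boundary point. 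Integrating recovers $m$ as the barycenter of the hitting distribution, which is exactly ${}_e\nu$. This forces $m = {}_e\nu$.

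The hard part will be making the convergence-to-Dirac-masses argument rigorous using only what the random-walk literature provides in citable form, rather than reproving strong proximality from scratch. The cleanest route is probably to invoke the known result directly: the authors explicitly attribute Theorem~\ref{03-06-19d} to Woess \cite{Wo3} (with Kaimanovich's Theorem~7.6 in \cite{Ka2} as an alternative reference), so the intended proof is almost certainly a citation together with a brief indication that existence comes from the harmonic measure ${}_e\nu$ and uniqueness from the maximality of the Poisson boundary realized on $\partial\Gamma$. I would therefore present the existence half in full (one line via Corollary~\ref{25-04-19} and the stationarity computation already in Lemma~\ref{23-05-19ax}) and then cite \cite{Wo3} or \cite{Ka2} for uniqueness, remarking that uniqueness is equivalent to the statement that the only positive bounded $\mu$-harmonic functions are the constants-times-kernels represented by ${}_e\nu$, which holds precisely because $(\partial\Gamma,{}_e\nu)$ is the full Poisson boundary.
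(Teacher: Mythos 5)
The paper gives no proof of this theorem at all---it is stated as a known result of Woess \cite{Wo3} (see also Theorem 7.6 in \cite{Ka2})---and your proposal correctly identifies this and ultimately defers to those same citations, so it matches the paper's approach. Your surrounding sketch of the uniqueness argument (contraction of the translated measures $(g_1\cdots g_n)_*m$ to Dirac masses at the exit point, then integration against the hitting distribution) is indeed the standard proof from the cited literature, though your parenthetical claim that uniqueness is \emph{equivalent} to maximality of the Poisson boundary overstates the bounded-harmonic-function dictionary, since an arbitrary $\mu$-stationary measure need not be absolutely continuous with respect to ${}_e\nu$ a priori.
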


Combined with the observation made in the proof of 2) $\Rightarrow$ 3) of Lemma \ref{23-05-19ax}, that all $K$-conformal measures are $\mu$-stationary, we only have to chase the litterature to obtain the following.

\begin{thm}\label{03-06-19e} Let $\Gamma$ be a torsion free non-elementary hyperbolic group and $(\Gamma,\mu)$ a random walk on $\Gamma$ with finite support. There is a $\beta$-KMS state for $\alpha^{D^{\mu}}$ if and only if $\beta = 1$. The $1$-KMS  state $\omega$ for $\alpha^{D^{\mu}}$ is unique and given by the formula
\begin{equation}\label{12-06-19}
\omega(a) = \int_{\partial_M \Gamma} Q(a) \ \mathrm{d}{}_e\nu
\end{equation}
where $Q : C(\partial_M \Gamma) \rtimes_r \Gamma \to C(\partial_M \Gamma)$ is the canonical conditional expectation and ${}_e \nu$ is the $\mu$-harmonic measure on $\partial_M \Gamma$.  
\end{thm}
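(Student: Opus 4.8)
The plan is to establish two things: that the KMS spectrum of $\alpha^{D^{\mu}}$ is exactly $\{1\}$, and that the unique $1$-KMS state is the state $\omega_{{}_e\nu}$ appearing in \eqref{12-06-19}. First I would pin down the spectrum using Corollary \ref{24-01-20}. Since $\Gamma$ is non-elementary hyperbolic, it contains two loxodromic elements with disjoint pairs of fixed points on $\partial \Gamma = \partial_M \Gamma$ (the identification being Ancona's theorem \cite{A}), so the $\Gamma$-action on $\partial_M\Gamma$ has no global fixed point. A spine would be such a fixed point by Lemma \ref{12-01-20b}, so there is no spine, and Corollary \ref{24-01-20} leaves only $\{0,1\}$ or $\{1\}$ as possibilities.

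To exclude $\beta = 0$, I would observe that a $0$-KMS state restricts to a $\Gamma$-invariant (equivalently $K^0$-conformal) probability measure $m$ on $\partial_M\Gamma$. Such an $m$ is automatically $\mu$-stationary, so by the uniqueness in Theorem \ref{03-06-19d} it must equal the harmonic measure ${}_e\nu$. Then ${}_e\nu$ itself would be $\Gamma$-invariant, which by Lemma \ref{23-05-19ax} forces the Poisson boundary to be trivial; but a non-elementary hyperbolic group is non-amenable, and a random walk on a non-amenable group never has trivial Poisson boundary. This contradiction rules out $\beta = 0$, so the spectrum is $\{1\}$. Existence and the formula at $\beta = 1$ are then immediate: ${}_e\nu$ is $K$-conformal by Corollary \ref{25-04-19}, so by Lemma \ref{23-01-19ax} the state $\omega_{{}_e\nu}(a) = \int_{\partial_M\Gamma} Q(a)\,\mathrm{d}{}_e\nu$ of \eqref{23-01-19cx} is a $1$-KMS state, which is exactly \eqref{12-06-19}.

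For uniqueness I would first show there is only one $K$-conformal measure: every $K$-conformal measure is $\mu$-stationary (as in the proof of $2)\Rightarrow 3)$ of Lemma \ref{23-05-19ax}, via Lemma \ref{30-05-19}), so Theorem \ref{03-06-19d} again forces it to be ${}_e\nu$. The hard part is passing from the unique conformal measure to a unique KMS state, since the restriction map from $1$-KMS states to conformal measures need not be injective when the action is not free — and here it is genuinely not free, because $\Gamma$ is torsion free, so every nontrivial element is loxodromic and fixes its two endpoints on $\partial\Gamma$. What rescues uniqueness is that the action is \emph{essentially} free: the set of points of $\partial\Gamma$ fixed by some nontrivial element is a countable union of two-point sets, hence countable, and ${}_e\nu$ is non-atomic (by \cite{Ka1}, since the Poisson boundary is non-trivial), so this set is ${}_e\nu$-null and the stabilizer $\Gamma_\xi$ is trivial for ${}_e\nu$-almost every $\xi$.

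Finally I would lift a $1$-KMS state $\omega$ on the reduced crossed product to $\omega \circ q$ on the full crossed product and apply Neshveyev's classification \cite{N}, exactly as in Proposition \ref{24-01-20a}: a $1$-KMS state there is determined by a conformal measure together with a measurable $\Gamma$-equivariant field of tracial states on the stabilizer algebras $C^*(\Gamma_\xi)$. The measure must be ${}_e\nu$ by the previous paragraph, and since $\Gamma_\xi$ is almost everywhere trivial the field is forced to be the trivial one. Hence $\omega \circ q$, and therefore $\omega$, is uniquely determined and equals $\omega_{{}_e\nu}$. The main obstacle is thus conceptual rather than computational: recognizing that the lack of freeness is confined to an ${}_e\nu$-null set and that Neshveyev's parametrization collapses to the single datum ${}_e\nu$ once the stabilizers are almost everywhere trivial.
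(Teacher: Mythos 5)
Your proposal is correct and follows essentially the same route as the paper's proof: Ancona's identification $\partial_M\Gamma=\partial\Gamma$, absence of a spine (the paper uses minimality of the boundary action, you use the absence of a global fixed point --- equivalent here), uniqueness of the $\mu$-stationary measure to pin down ${}_e\nu$ as the only conformal measure, and then essential freeness (countably many points with non-trivial isotropy, since every non-trivial element of a torsion free hyperbolic group fixes exactly two boundary points, plus non-atomicity of ${}_e\nu$) fed into Neshveyev's classification. Your explicit exclusion of $\beta=0$ via non-amenability and the Kaimanovich dichotomy is only a slightly more detailed rendering of what the paper leaves implicit.
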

\begin{proof} By Anconas result, \cite{A}, we can identify $\partial_M\Gamma$ and the Gromow boundary $\partial \Gamma$. The action of $\Gamma$ on $\partial \Gamma$ is minimal, cf. e.g. Proposition 4.2 in \cite{KB}, and it follows therefore that A) of Proposition \ref{07-01-20d} does not occur and hence the only values of $\beta$ for which there can be an $K^{\beta}$-conformal measure is $0$ and $1$. In particular, an $K^{\beta}$-conformal measure is $\mu$-stationary and it follows from Theorem \ref{03-06-19d} that the harmonic measure is the unique $K^{\beta}$-conformal measure. To see that the harmonic measure only is responsible for one $1$-KMS state for $\alpha^{D^{\mu}}$, note that it follows from Proposition 4.2 in \cite{KB} that every element of $\Gamma  \backslash \{e\}$ fixes exactly two points in $\partial_M \Gamma$. This implies, in particular, that the elements of $\partial_M \Gamma$ with non-trivial isotropy group are countable. In \cite{Wo3} Woess has shown that ${}_e\nu$ has no atoms, and it follows from this that ${}_e\nu$ is concentrated on points with no isotropy and hence from \cite{N} that there is only one $1$-KMS state, given by the formula \eqref{12-06-19}.
\end{proof}

As we have used above, in the setting of Theorem \ref{03-06-19e} the crossed product $C(\partial_M \Gamma) \rtimes_r \Gamma$ is independent of the random walk and agrees with $C(\partial \Gamma) \rtimes_r \Gamma$ thanks to the work of Ancona, \cite{A}. Theorem \ref{03-06-19e} can therefore be considered as a statement about different flows on the same $C^*$-algebra; $C(\partial \Gamma) \rtimes_r \Gamma$. It follows from the work of many hands, culminating in the work by Tu in \cite{Tu}, that $C(\partial \Gamma) \rtimes_r \Gamma$ is a Kirchberg algebra satisfying the UCT when $\Gamma$ has the properties specified in Theorem \ref{03-06-19e}. In general the structure of $C(\partial \Gamma) \rtimes_r \Gamma$ is unknown unless its K-theory groups are known, but for free groups the structure was completely unravelled by Spielberg in \cite{Sp} and his approach has been adopted to free products of finite groups over a common subgroup by Okayasu in \cite{O}. 
 
It should be noted that it is crucial in Theorem \ref{03-06-19e} that $\Gamma$ is torsion free. If for example $\Gamma$ is a direct product $\Gamma = H \times \mathbb F_2$ where $H$ is a finite group and $\mathbb F_2$ is the free group on two generators, then $\Gamma$ is non-elementary hyperbolic and for any finitely supported random walk $\mu$ on $\Gamma$ the harmonic measure is the only $K^{\beta}$-conformal measure. But there are many $1$-KMS states in this case; it is not difficult to see that the simplex of $1$-KMS states for the flow $\alpha^{D^{\mu}}$ is affinely homeomorphic to the tracial state space of $C^*(H)$. These additional $1$-KMS states arise because the action of $\Gamma$ on $\partial_M \Gamma$ is not free; the subgroup $H$ acts trivially on $\partial \Gamma$.

It would be wrong to conclude from Theorem \ref{03-06-19e} that the structure of KMS states for $\alpha^{D^{\mu}}$ is $\mu$-independent in the setting of Theorem \ref{03-06-19e}, although it is necessary to consider invariants finer than the set of inverse temperatures and the simplices of KMS-states. Specifically, it follows from \cite{INO} that the factor type of the harmonic measure varies with $\mu$, e.g. for nearest neighbour random walks on free groups.

\section{Examples with many $K$-conformal measures}\label{09-03-20c}

Fix a natural number $q \geq 2$ and let $\mathbb D = \mathbb Z_q \wr \mathbb Z $ be the wreath product of $\mathbb Z_q$ and $\mathbb Z$. In more detail $\mathbb D$ is the semi-direct product $\left( \oplus_{\mathbb Z} \mathbb Z_q\right) \rtimes \mathbb Z$ where $\mathbb Z$ acts on $  \oplus_{\mathbb Z} \mathbb Z_q$ by shifts, cf. e.g. \cite{BW}.

\begin{prop}\label{12-01-20d} (Woess, Brofferio-Woess) There is a finitely supported random walk $(\mathbb D,\mu_0)$ such that $\partial_M \mathbb D$ contains a spine in $\overline{\partial_m \mathbb D}\backslash \partial_m\mathbb D$.
\end{prop}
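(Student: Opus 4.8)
The plan is to produce the spine as a Martin-boundary limit of minimal boundary points for a lamplighter walk with drift, reading off its location from the explicit Martin boundary computed by Woess and Brofferio--Woess.

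First I would fix $\mu_0$ finitely supported on $\mathbb D=\mathbb Z_q\wr\mathbb Z$ whose pushforward under the projection $\mathbb D\to\mathbb Z$ has strictly positive mean; for instance one may take $\mu_0$ supported on elements that move the lamplighter by $\pm1$ and switch finitely many lamps, weighted so that the $\mathbb Z$-coordinate has positive drift. Such a walk is transient, its lamplighter coordinate escapes to $+\infty$ almost surely, every lamp is switched only finitely often and hence freezes, and the law of the resulting terminal lamp configuration is nondegenerate; thus the Poisson boundary of $(\mathbb D,\mu_0)$ is nontrivial, cf.\ \cite{Wo2}. By the equivalence $1)\Leftrightarrow 5)$ of Lemma \ref{23-05-19ax} this means that $\partial_m\mathbb D$ contains \emph{no} spine. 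Consequently it suffices to exhibit a single spine $\xi_0\in\partial_M\mathbb D$ lying in the closure $\overline{\partial_m\mathbb D}$: it will then automatically satisfy $\xi_0\in\overline{\partial_m\mathbb D}\setminus\partial_m\mathbb D$.

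Next I would work with the identity $K(g,h)=F(g,h)/F(e,h)$, so that a boundary point $\xi_0$ is a spine exactly when there is a sequence $x_n\to\infty$ in $\mathbb D$ with $F(g,x_n)/F(e,x_n)\to1$ for every $g$. The guiding picture is the elementary one on $\mathbb Z$: for a walk on $\mathbb Z$ with positive drift one has $F(j,n)=1$ for all $n\ge j\ge0$, whence $K(\cdot,n)\to1$ along $n\to+\infty$, i.e.\ the drift end is a spine. On the lamplighter the analogous degenerate direction is reached by escaping in the drift direction along targets $x_n$ whose terminal lamp configurations oblige every trajectory that reaches them to sweep over, and correctly set, any prescribed finite block of lamps; starting from $g=(\eta,j)$ rather than from $e$ then affects the hitting probability only through work that both trajectories must perform anyway, so $F(g,x_n)/F(e,x_n)\to1$ for all $g$ as the forced block grows. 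By the description of the Martin compactification of $(\mathbb D,\mu_0)$ in \cite{Wo2} and \cite{BW} these targets are limits of minimal boundary points, so their limit $\xi_0\in\partial_M\mathbb D$ satisfies $K(\cdot,\xi_0)\equiv1$ and lies in $\overline{\partial_m\mathbb D}$. Thus $\xi_0$ is a spine in the sense of Definition \ref{12-01-20}, and together with the first paragraph this proves the proposition.

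The main obstacle is precisely the limit $F(g,x_n)/F(e,x_n)\to1$, together with the separation of $\partial_M\mathbb D$ from $\partial_m\mathbb D$: both rest on the sharp asymptotics of the Green and hitting kernels of $(\mathbb D,\mu_0)$ on the associated Diestel--Leader graph and on the explicit form of the minimal Martin kernels there, which are exactly the technical results of Brofferio and Woess that I would invoke rather than reprove. Once those estimates are granted the remaining points are routine: verifying that the approximating points are genuinely minimal, hence lie in $\partial_m\mathbb D$; that their convergence to $\xi_0$ is convergence of characters in $\overline{\Gamma}^{\omega_{\mu_0}}$, i.e.\ pointwise convergence of the kernels $K(g,\cdot)$; and that the limiting kernel is identically $1$, so that $\xi_0$ meets Definition \ref{12-01-20}.
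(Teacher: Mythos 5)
Your first paragraph is sound and is in fact a small improvement on the paper's own argument: for a finitely supported walk on $\mathbb D$ with nonzero drift the Poisson boundary is nontrivial, so the contrapositive of $5)\Rightarrow 1)$ in Lemma \ref{23-05-19ax} shows there is no spine in $\partial_m\mathbb D$, and any spine located in $\overline{\partial_m\mathbb D}$ is then automatically in $\overline{\partial_m\mathbb D}\setminus\partial_m\mathbb D$. The paper instead quotes remark (6) of \cite{Wo4} for the ``not minimal'' half, so this reduction is a genuine simplification. But it also means that the entire content of the proposition now rests on your second paragraph, and there the argument has a real gap.

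Nontriviality of the Poisson boundary says nothing about whether the constant function $1$ is actually realized as a Martin kernel, i.e.\ whether a spine exists at all, nor about whether it is a limit of minimal points; that is precisely what must be extracted from \cite{Wo4} and \cite{BW}, and your proposal replaces it with a heuristic rather than an identified statement. Moreover the heuristic is doubtful as stated. You take targets $x_n$ escaping in the drift direction whose lamp configurations force every trajectory to visit and correctly set a growing block, and claim the forced work washes out the starting point. It does not in any obvious way: already for the projected walk on $\mathbb Z$ with rightward drift, the probability of ever reaching a site $-m$ from $j$ versus from $0$ differs by the factor $\left((1-p)/p\right)^{j}$ for every $m$, so forcing a long excursion does not by itself make $F(g,x_n)/F(e,x_n)\to 1$, and the dependence on the initial lamp configuration $\eta$ (which one trajectory must erase and the other need not) is not addressed at all. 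The mechanism the paper actually uses is different: by Theorem 5.7 and Proposition 3.6 of \cite{BW} the minimal kernels factor over the two trees of the Diestel--Leader graph, the spine is the specific point $\omega_1^{\infty}\omega_2^{-\infty}$, and it is approached by minimal points $\xi_n\omega_2^{-\infty}$ whose tree coordinate $\xi_n$ converges to the reference end $\omega_1$, for which the corresponding factor $K_1(\cdot,\xi_n)$ tends to $1$. To complete your proof you must either cite those results explicitly, as the paper does, or actually establish your limit $F(g,x_n)/F(e,x_n)\to 1$ together with the approximability of the limit point by minimal boundary points; neither is routine, and the second is not even formulated precisely in your proposal.
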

\begin{proof} Let $\mu_0$ be the random walk labelled (2.5) on page 420 of \cite{Wo4} for some $\alpha \neq 1/2$. As noted in remark (6) on page 432 of \cite{Wo4} the Martin boundary contains then a spine outside the minimal Martin boundary. That the spine is an element of the closure of the minimal Martin boundary follows from the description of the Martin kernels given in Theorem 5.7 of \cite{BW}. Specifically, in the notation of \cite{BW} it follows from Theorem 5.7 of \cite{BW}that the point $\omega_1^{ \infty}\omega_2^{-\infty}$ is a spine in $\partial_M \mathbb D \backslash \partial_m\mathbb D$. To see that it lies in the closure $\overline{\partial_m \mathbb D}$ of the minimal Martin boundary we use the notation from \cite{BW} and consider a sequence $\{\xi_n\}$ in $\partial^* \mathbb T_q$ such that $\lim_{n \to \infty} d(o_1,\xi_n \wedge \omega_1) = \infty$. It follows from Proposition 3.6 in \cite{BW} that $\lim_{n \to \infty} K_1(x_1,\xi_n) = 1$ for all $x_1 \in \mathbb T_q$ and then from Theorem 5.7 in \cite{BW} that $\lim_{n \to \infty} \xi_n \omega_2^{-\infty} = \omega_1^{\infty} \omega_2^{-\infty}$ in $\partial_M \Gamma$ while $\xi_n\omega_2^{-\infty} \in \partial_m \mathbb D$ for all $n$. Hence $\omega_1^{ \infty}\omega_2^{-\infty} \in \overline{\partial_m \mathbb D}\backslash \partial_m \mathbb D$. Similarly, $\omega_1^{- \infty}\omega_2^{\infty}$ is a spine in $\overline{\partial_m \mathbb D}\backslash \partial_m\mathbb D$ when $\alpha > \frac{1}{2}$. 
\end{proof}

Let $\xi_0$ be the spine in $\overline{\partial_m \mathbb D}\backslash \partial_m\mathbb D$. It follows that for all $\beta \notin \{0,1 \}$ the Dirac measure $\delta_{\xi_0}$ is the unique $K^{\beta}$-conformal measure on $\partial_M \mathbb D$ while for $\beta = 1$ there are at least two ergodic $K$-conformal measures, namely $\delta_{\xi_0}$ and the harmonic measure ${}_e \nu$; the latter is non-atomic by results of Kaimanovich. By going more into the details of \cite{BW} it is possible to show that $\delta_{\xi_0}$ and ${}_e\nu$ are the only ergodic $K$-conformal measures, but we skip the proof because we loose track of the exact number of $K$-conformal measures in the following constructions anyway.

%To produce examples with more ergodic $ e^{\beta D^{\mu}}$-conformal measures we proceed as follows.

Let $(\Gamma_1,\mu_1)$ and $(\Gamma_0,\mu_0)$ be finitely supported random walks such that $\overline{\partial_m \Gamma_0} \backslash \partial_m \Gamma_0$ contains a spine, e.g. the random walk from Proposition \ref{12-01-20d}. Let $0 < a < 1$ and define a probability measure $\mu_2$ on $\Gamma_2 = \Gamma_0 \times \Gamma_1$ such that
$$
\mu_2(g,h) = \begin{cases}a\mu_0(e)+(1-a)\mu_1(e) \ & \ \text{when} \ (g,h) = (e,e) \\
 a\mu_0(g) \ & \ \text{when} \ h = e \text{ and } g\neq e\\ (1-a)\mu_1(h) \ & \ \text{when} \ g = e \text{ and } h\neq e\\ 0 \ & \ \text{otherwise} \ . \end{cases}
$$

\begin{lemma}\label{12-01-20e} There is a continuous injective map $\Phi : \overline{\partial_m\Gamma_1} \to \overline{\partial_m \Gamma_2}$ defined such that 
\begin{equation}\label{13-01-20}
K((g,h),\Phi(\xi)) \ = \ K(h,\xi) 
\end{equation}
for all $(g,h) \in \Gamma_2$. 
\end{lemma}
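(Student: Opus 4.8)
Throughout I write $K_0,K_1,K_2$ for the Martin kernels of $(\Gamma_0,\mu_0)$, $(\Gamma_1,\mu_1)$ and $(\Gamma_2,\mu_2)$, and $\varphi_0\otimes\varphi_1$ for the function $(g,h)\mapsto\varphi_0(g)\varphi_1(h)$ on $\Gamma_2$. The plan is to realize $\Phi(\xi)$ as the point of $\partial_M\Gamma_2$ whose Martin kernel is the product $1\otimes K_1(\cdot,\xi)$; the identity \eqref{13-01-20} then reads off at once, since $K_2((g,h),\Phi(\xi))=1\cdot K_1(h,\xi)$.

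The first step is to record how the data of $\mu_0$ and $\mu_1$ assemble into that of $\mu_2$. Since $\mu_2=a\,\mu_0\otimes\delta_e+(1-a)\,\delta_e\otimes\mu_1$ and the two terms act on disjoint coordinates, counting the interleavings of $k$ steps in $\Gamma_0$ with $j$ steps in $\Gamma_1$ gives
\begin{equation*}
G_2\big((e,e),(g,h)\big)\ =\ \sum_{k,j\ge 0}\binom{k+j}{k}a^k(1-a)^j\,(\mu_0)^k_{e,g}\,(\mu_1)^j_{e,h}.
\end{equation*}
A direct computation from the harmonicity equations -- using that the constant function $1$ is $\mu_0$-harmonic and that $K_1(\cdot,\xi)$ is $\mu_1$-harmonic for every $\xi\in\overline{\partial_m\Gamma_1}\subseteq\partial_M\Gamma_1$ by Lemma \ref{30-05-19} -- shows that $1\otimes K_1(\cdot,\xi)$ is a positive, normalized $\mu_2$-harmonic function. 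More generally $\varphi_0\otimes\varphi_1$ is $\mu_2$-harmonic whenever $\varphi_0$ is $\mu_0$-harmonic and $\varphi_1$ is $\mu_1$-harmonic, and the same computation with $\lambda$-harmonic functions produces the eigenvalue relation $a\lambda_0+(1-a)\lambda_1=1$ that governs products.

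Next I would produce $\Phi(\xi)$ as an honest boundary point, and this is where the hypothesis on $\Gamma_0$ enters. Write the spine $\zeta_0$ of $\Gamma_0$ as $\zeta_0=\lim_m\eta_m$ with $\eta_m\in\partial_m\Gamma_0$, so that $K_0(\cdot,\eta_m)\to K_0(\cdot,\zeta_0)\equiv 1$, and choose $\beta_m\in\partial_m\Gamma_1$ with $\beta_m\to\xi$. By the description of the minimal Martin boundary of a direct product of random walks -- the product of two minimal kernels with matching eigenvalues is again a minimal kernel, cf. \cite{Wo2},\cite{BW} -- each $K_0(\cdot,\eta_m)\otimes K_1(\cdot,\beta_m)$ equals $K_2(\cdot,\eta_m\times\beta_m)$ for a point $\eta_m\times\beta_m\in\partial_m\Gamma_2$. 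These products converge pointwise on $\Gamma_2$ to $1\otimes K_1(\cdot,\xi)$, simply because each factor converges. By compactness of the Martin compactification the points $\eta_m\times\beta_m$ accumulate at some $\zeta\in\overline{\partial_m\Gamma_2}$ with $K_2(\cdot,\zeta)=1\otimes K_1(\cdot,\xi)$; since the kernels separate boundary points this $\zeta$ is unique, the whole sequence converges, and I set $\Phi(\xi)=\zeta$. This gives \eqref{13-01-20} and places $\Phi(\xi)$ in $\overline{\partial_m\Gamma_2}$.

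Injectivity and continuity are then formal. If $\Phi(\xi)=\Phi(\xi')$ then $K_1(h,\xi)=K_2((e,h),\Phi(\xi))=K_1(h,\xi')$ for all $h\in\Gamma_1$, and since the functions $K_1(h,\cdot)$ separate the points of $\partial_M\Gamma_1$ we get $\xi=\xi'$; and if $\xi_k\to\xi$ in $\overline{\partial_m\Gamma_1}$ then $K_2((g,h),\Phi(\xi_k))=K_1(h,\xi_k)\to K_2((g,h),\Phi(\xi))$ for every $(g,h)\in\Gamma_2$, which is precisely convergence $\Phi(\xi_k)\to\Phi(\xi)$ in the Martin topology. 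The main obstacle is the boundary-theoretic input used above: that the product kernels $K_0(\cdot,\eta)\otimes K_1(\cdot,\beta)$ are genuine Martin kernels of $\Gamma_2$ lying in $\partial_m\Gamma_2$, and that the degenerate limit $\eta_m\to\zeta_0$ keeps the resulting points inside the \emph{closure} of the minimal boundary. The spine of $\Gamma_0$ is exactly what makes this work: it supplies a family of minimal kernels $K_0(\cdot,\eta_m)$ approaching the constant $1$, which forces the first coordinate of the limiting kernel to trivialize while keeping $\Phi(\xi)$ in $\overline{\partial_m\Gamma_2}$. Making this convergence rigorous -- for which the combinatorial formula for $G_2$ above together with the Harnack inequality \eqref{harnach} provide the needed control -- is the only step requiring genuine work beyond bookkeeping.
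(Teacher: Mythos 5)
Your proposal is correct and follows essentially the same route as the paper: the paper also approximates the spine of $\Gamma_0$ by a sequence in $\partial_m\Gamma_0$ and $\xi$ by a sequence in $\partial_m\Gamma_1$, invokes Theorem 3.3 of \cite{PW} to realize the product kernels as points of $\partial_m\Gamma_2$, and takes the limit in the Martin compactification, with injectivity and continuity read off from the kernel identity exactly as you do. The only difference is cosmetic: the paper attributes the product-of-minimal-kernels fact to Picardello--Woess rather than deriving it from the interleaving formula for $G_2$, and it treats the final convergence as immediate since the Martin topology is precisely pointwise convergence of the kernels.
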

\begin{proof} Let $\xi_0$ be the spine in $\overline{\partial_m \Gamma_0} \backslash \partial_m \Gamma_0$ and choose a sequence $\{\xi_n\}$ in $\partial_m \Gamma_0$ such that $\lim_{n \to \infty} K(g,\xi_n) = 1$ for all $g \in \Gamma_0$. Let $\xi \in \overline{\partial_m \Gamma_1}$ and choose a sequence $\{\eta_n\}$ in $\partial_m \Gamma_1$ such that $\lim_{n \to \infty} K(h,\eta_n) = K(h,\xi)$ for all $h \in \Gamma_1$. It follows from Theorem 3.3 in \cite{PW} that there is a sequence $\{\omega_n\}$ in $\partial_m \Gamma_2$ such that
$$
K((g,h), \omega_n) \ = \ K(g,\xi_n)K(h,\eta_n) \ \ \forall (g,h) \in  \Gamma_{2} \ .
$$
Note that $\{\omega_n\}$ converges in $\partial_M \Gamma_2$ to an element $\Phi(\xi) \in \overline{\partial_m \Gamma_2}$ for which \eqref{13-01-20} holds. The resulting map $\Phi : \overline{\partial_m\Gamma_1} \to \overline{\partial_m \Gamma_2}$ is clearly injective and continuous.
\end{proof}

\begin{lemma}\label{13-01-20b} The map $\Phi : \overline{\partial_m\Gamma_1} \to \partial_M\Gamma_2$ of Lemma \ref{12-01-20e} has the property that $(g,h)\Phi(\xi) = \Phi(h\xi)$ for all $(g,h) \in \Gamma_2$ and all $\xi \in \overline{\partial_m\Gamma_1}$.
\end{lemma}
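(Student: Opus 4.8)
The plan is to exploit the fact that the extended Martin kernels $K\bigl((x,y),\cdot\bigr)$, $(x,y)\in\Gamma_2$, separate the points of $\partial_M\Gamma_2$, so that it suffices to verify
$$
K\bigl((x,y),(g,h)\Phi(\xi)\bigr)\ =\ K\bigl((x,y),\Phi(h\xi)\bigr)
$$
for every $(x,y)\in\Gamma_2$. The right-hand side is immediate from the defining identity \eqref{13-01-20} of $\Phi$, provided $h\xi$ again lies in the domain $\overline{\partial_m\Gamma_1}$; granting this, it equals $K(y,h\xi)$. So the whole matter reduces to computing the left-hand side and checking that it too equals $K(y,h\xi)$.

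For the left-hand side I would apply the cocycle relation \eqref{16-04-19x} in $\Gamma_2$, with the group element $(g,h)$ and the boundary point $\Phi(\xi)$, to write
$$
K\bigl((x,y),(g,h)\Phi(\xi)\bigr)\ =\ \frac{K\bigl((g^{-1}x,h^{-1}y),\Phi(\xi)\bigr)}{K\bigl((g^{-1},h^{-1}),\Phi(\xi)\bigr)}\ .
$$
Both kernels on the right are governed by \eqref{13-01-20}, which only sees the $\Gamma_1$-coordinate: the numerator equals $K(h^{-1}y,\xi)$ and the denominator equals $K(h^{-1},\xi)$. Their quotient $K(h^{-1}y,\xi)/K(h^{-1},\xi)$ is, by a second application of the cocycle relation \eqref{16-04-19x} now inside $\Gamma_1$ with element $h$ and boundary point $\xi$, precisely $K(y,h\xi)$. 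This matches the right-hand side and completes the separation argument.

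The only point requiring genuine care --- and the natural candidate for the main obstacle --- is the well-definedness of $\Phi(h\xi)$, i.e. the $\Gamma_1$-invariance of $\overline{\partial_m\Gamma_1}$. For this I would first note that $\partial_m\Gamma_1$ itself is $\Gamma_1$-invariant: for a positive $\mu_1$-harmonic $\varphi$ the translate $g\mapsto\varphi(h^{-1}g)$ is again $\mu_1$-harmonic, so left translation by $h$ is a linear automorphism of the cone of positive harmonic functions and therefore carries minimal (extremal) functions to minimal functions. Since $g\mapsto K(g,h\xi)$ is, up to the normalizing constant $K(h^{-1},\xi)$, the translate $g\mapsto K(h^{-1}g,\xi)$ of the minimal function $g\mapsto K(g,\xi)$, minimality is preserved and $h\xi\in\partial_m\Gamma_1$ whenever $\xi\in\partial_m\Gamma_1$. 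Because the action of $\Gamma_1$ is by homeomorphisms of $\partial_M\Gamma_1$, invariance passes to the closure, giving $h\,\overline{\partial_m\Gamma_1}=\overline{\partial_m\Gamma_1}$ and hence $h\xi\in\overline{\partial_m\Gamma_1}$ for all $\xi\in\overline{\partial_m\Gamma_1}$. With this in hand the computation above applies verbatim and the proof is finished.
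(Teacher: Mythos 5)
Your argument is correct and is essentially the paper's own proof: the same two applications of the cocycle relation \eqref{16-04-19x} combined with the defining identity \eqref{13-01-20}, followed by the observation that the kernels $K((x,y),\cdot)$ separate points of $\partial_M\Gamma_2$. Your additional verification that $\overline{\partial_m\Gamma_1}$ is $\Gamma_1$-invariant (so that $\Phi(h\xi)$ is defined) is a point the paper leaves implicit, and your justification of it is sound.
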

\begin{proof} Using \eqref{16-04-19x} we find that
\begin{align*}
& K((g',h'), \Phi(h\xi)) \ = \ K(h',h\xi) \ = \ \frac{K(h^{-1}h',\xi)}{K(h^{-1},\xi)} \\
& =  \frac{K((g^{-1}g',h^{-1}h'),\Phi(\xi))}{K((g^{-1},h^{-1}),\Phi(\xi))} \ = \  K( (g',h'),(g,h)\Phi(\xi))  \ ,
\end{align*}
from which the statement follows.
\end{proof} 

\begin{lemma}\label{13-01-20c} $\Phi\left(\overline{\partial_m \Gamma_1}\right) \cap \partial_m \Gamma_2 = \emptyset$.
\end{lemma}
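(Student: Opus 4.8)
The plan is to use the description of $\partial_m\Gamma_2$ as the set of $\zeta \in \partial_M\Gamma_2$ for which $(g,h) \mapsto K((g,h),\zeta)$ is a \emph{minimal} $\mu_2$-harmonic function. By \eqref{13-01-20} the function attached to $\Phi(\xi)$ is $(g,h) \mapsto K(h,\xi)$, which is independent of the $\Gamma_0$-coordinate $g$. I would show that this function is never minimal by exhibiting a non-trivial convex decomposition of it into $\mu_2$-harmonic functions, so that $\Phi(\xi) \notin \partial_m\Gamma_2$ for every $\xi \in \overline{\partial_m\Gamma_1}$.

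First I would record the product structure of $\mu_2$: a function $\varphi$ on $\Gamma_2$ is $\mu_2$-harmonic precisely when
$$
a \sum_{s \in \Gamma_0}\mu_0(s)\,\varphi(gs,h) \ + \ (1-a)\sum_{t \in \Gamma_1}\mu_1(t)\,\varphi(g,ht) \ = \ \varphi(g,h) .
$$
From this, together with Lemma \ref{30-05-19} (applicable since $\mu_0,\mu_1$ have finite support, so \eqref{07-01-20} holds), it follows that for any $\zeta \in \partial_M\Gamma_0$ and $\xi \in \overline{\partial_m\Gamma_1}$ the product $(g,h) \mapsto K(g,\zeta)K(h,\xi)$ is $\mu_2$-harmonic, because $K(\cdot,\zeta)$ is $\mu_0$-harmonic, $K(\cdot,\xi)$ is $\mu_1$-harmonic, and $a + (1-a) = 1$. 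Let $\nu$ be the harmonic measure of $(\Gamma_0,\mu_0)$; by Theorem \ref{23-04-19c} (with $B = \partial_M\Gamma_0$) it represents the constant function, $\int_{\partial_m\Gamma_0} K(g,\zeta)\,\mathrm d\nu(\zeta) = 1$ for all $g \in \Gamma_0$, and it is concentrated on $\partial_m\Gamma_0$ by Theorem \ref{07-01-20xx}. Integrating the product functions against $\nu$ (the interchange is immediate since $\mu_2$ has finite support) gives
$$
K(h,\xi) \ = \ \int_{\partial_m\Gamma_0} K(g,\zeta)K(h,\xi)\,\mathrm d\nu(\zeta) ,
$$
a representation of our function as an average of $\mu_2$-harmonic functions.

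The key point is that $\nu$ is not a point mass. Indeed, if $\nu = \delta_\zeta$ for some $\zeta \in \partial_m\Gamma_0$, then $K(\cdot,\zeta) \equiv 1$, so $\zeta$ would be a spine lying in $\partial_m\Gamma_0$; but the spine of $\Gamma_0$ is unique and, by hypothesis, lies in $\overline{\partial_m\Gamma_0}\backslash\partial_m\Gamma_0$, a contradiction. (Equivalently, the Poisson boundary of $(\Gamma_0,\mu_0)$ is non-trivial by Lemma \ref{23-05-19ax}, so $\nu$ is non-atomic by \cite{Ka1}.) Hence I can split $\partial_m\Gamma_0 = E_1 \sqcup E_2$ into disjoint Borel sets with $\nu(E_i) > 0$ and set $\psi_i(g,h) = \nu(E_i)^{-1}\int_{E_i} K(g,\zeta)\,\mathrm d\nu(\zeta)\cdot K(h,\xi)$. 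Each $\psi_i$ is $\mu_2$-harmonic and normalised, with $\psi_i(e,e)=1$ because $K(e,\cdot) \equiv 1$, and $K(h,\xi) = \nu(E_1)\psi_1 + \nu(E_2)\psi_2$ is a proper convex combination. Finally $\psi_1 \ne \psi_2$: their $\Gamma_0$-factors are represented by the mutually singular probability measures $\nu(E_1)^{-1}\nu|_{E_1}$ and $\nu(E_2)^{-1}\nu|_{E_2}$ on $\partial_m\Gamma_0$, so by uniqueness of the minimal Martin representation they are distinct harmonic functions. Thus $K(h,\xi)$ is not minimal and $\Phi(\xi)\notin\partial_m\Gamma_2$.

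The main obstacle is really the bookkeeping in the second step, namely verifying the $\mu_2$-harmonicity of the product functions from the explicit form of $\mu_2$ and matching the two harmonicity equations through the weights $a$ and $1-a$. The conceptual heart, by contrast, is the short observation that the spine of $\Gamma_0$ sitting \emph{outside} $\partial_m\Gamma_0$ is exactly what forces the representing measure $\nu$ to be non-degenerate, which is what makes the decomposition non-trivial; everything else is the standard uniqueness of the Poisson--Martin representation.
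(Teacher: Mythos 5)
Your argument is correct, but it takes a genuinely different route from the paper's. The paper argues by contradiction via Theorem 3.3 of \cite{PW}: if $\Phi(\xi')$ were in $\partial_m\Gamma_2$, the classification of minimal harmonic functions on a Cartesian product would force $K((g,h),\Phi(\xi'))=f(g)f'(h)$ with $f$ minimal $s$-harmonic on $\Gamma_0$; since the left side is independent of $g$, $f$ is constant and $s=1$, so the constant function $1$ would be minimal $\mu_0$-harmonic, making the Poisson boundary of $(\Gamma_0,\mu_0)$ trivial and (by Lemma \ref{23-05-19ax}) placing the spine inside $\partial_m\Gamma_0$ --- a contradiction. You instead work constructively: you verify the product form of $\mu_2$-harmonicity directly from the definition of $\mu_2$, write $(g,h)\mapsto K(h,\xi)$ as the barycentre over the harmonic measure $\nu$ of $(\Gamma_0,\mu_0)$ of the product harmonic functions $K(g,\zeta)K(h,\xi)$, and observe that $\nu$ cannot be a point mass precisely because the spine of $\Gamma_0$ lies outside $\partial_m\Gamma_0$; splitting $\nu$ then yields a proper convex decomposition into two distinct normalised harmonic functions (distinctness via the uniqueness part of the Poisson--Martin representation, Theorem 7.53 in \cite{Wo2}), so the function is not minimal. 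Both proofs ultimately hinge on the same hypothesis, but yours avoids invoking the full product-boundary classification for this step (it is only needed to construct $\Phi$ in Lemma \ref{12-01-20e}) and makes explicit where the non-minimality comes from, at the cost of some bookkeeping with the decomposition; the paper's version is shorter given that \cite{PW} is already in play. All the steps you defer --- the interchange of sum and integral, the normalisation $\psi_i(e,e)=1$, positivity --- are routine under the finite-support assumption.
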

\begin{proof} Assume for a contradiction that there are points $\xi' \in \overline{\partial_m \Gamma_1}$ and $\xi \in \partial_m \Gamma_2$ such that $\Phi(\xi') = \xi$. It follows then from Theorem 3.3 in \cite{PW} that there are $s,t \in \mathbb R^+$ such that $as+(1-a)t = 1$ and non-negative functions $f : \Gamma_0  \to [0,\infty)$ and $f': \Gamma_1 \to [0,\infty)$ such that
\begin{enumerate}
\item[$\bullet$] $\sum_{x \in \Gamma_{0}} \mu_0(x)f(gx) = s f(g) \ \ \forall g \in \Gamma_0$,
\item[$\bullet$] $f$ is minimal with this property,
\item[$\bullet$] $\sum_{y \in \Gamma_1}\mu_{1}(y) f'(hy) = tf'(h) \ \ \forall h \in \Gamma_1 $,
\item[$\bullet$] $f'$ is minimal with this property, and
\item[$\bullet$] $K((g,h),\xi) \ = \ f(g)f'(h) \ \ \forall (g,h) \in \Gamma_{2}$.
\end{enumerate} 
Since $ K(h,\xi')  = K((g,h),\xi) = f(g)f'(h)$ for all $(g,h)$ it follows that $f$ is constant, say $f = c$ for some $c > 0$, and we find that
$$
c = \sum_{x \in \Gamma_0} \mu_0(x) c =  \sum_{x \in \Gamma_0} \mu_0(x) f(gx) = sf(g) = sc \ ,
$$
implying that $s = 1$. Hence $f =c$ is a minimal harmonic function on $\Gamma_0$, implying that the constant function $1$ is a minimal harmonic function on $\Gamma_0$. This means that the Poisson boundary of $(\Gamma_0,\mu_0)$ is trivial by Exercise 7.66 in \cite{Wo2}, and hence $\partial_m\Gamma_0$ contains a spine by Lemma \ref{23-05-19ax}. This contradicts that $\xi_0 \notin \partial_m \Gamma_0$.
\end{proof}

\begin{lemma}\label{13-01-20d} Let $\mu$ be an $K$-conformal measure concentrated on $\overline{\partial_m\Gamma_1}$. Then 
\begin{itemize}
\item $\mu \circ \Phi^{-1}$ is an $K$-conformal measure concentrated on $\overline{\partial_m \Gamma_2} \backslash \partial_m\Gamma_2$, 
\item $\mu \circ \Phi^{-1}$ is $\Gamma_2$-ergodic when $\mu$ is $\Gamma_1$-ergodic, 
\item $\mu \circ \Phi^{-1}$ is non-atomic when $\mu$ is, and
\item the map $\mu \mapsto \mu \circ \Phi^{-1}$ is injective.
\end{itemize}
\end{lemma}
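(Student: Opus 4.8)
The plan is to check the four bullet points in turn, using the two structural identities already established as the main tools: the kernel identity $K((g,h),\Phi(\xi)) = K(h,\xi)$ of \reflemma{12-01-20e} (equation \eqref{13-01-20}) and the equivariance $(g,h)\Phi(\xi) = \Phi(h\xi)$ of \reflemma{13-01-20b}. The support statement is immediate: since $\mu$ is concentrated on $\overline{\partial_m\Gamma_1}$, the push-forward $\mu\circ\Phi^{-1}$ is concentrated on $\Phi(\overline{\partial_m\Gamma_1})$, and by \reflemma{12-01-20e} this set lies in $\overline{\partial_m\Gamma_2}$ while by \reflemma{13-01-20c} it is disjoint from $\partial_m\Gamma_2$; hence $\mu\circ\Phi^{-1}$ is concentrated on $\overline{\partial_m\Gamma_2}\backslash\partial_m\Gamma_2$.

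For the conformality I would verify the criterion \eqref{12-01-20a} (with $\beta = 1$) of \reflemma{24-04-19a} on $\Gamma_2$. The key preliminary is the set identity $\Phi^{-1}((g,h)^{-1}B) = h^{-1}\Phi^{-1}(B)$, valid for all Borel $B \subseteq \partial_M\Gamma_2$ and all $(g,h)\in\Gamma_2$: a point $\xi$ lies in the left-hand side precisely when $(g,h)\Phi(\xi)\in B$, which by \reflemma{13-01-20b} means $\Phi(h\xi)\in B$, i.e. $h\xi\in\Phi^{-1}(B)$. Granting this, I apply in succession this identity, the $K$-conformality \eqref{12-01-20a} of $\mu$ on $\partial_M\Gamma_1$ in the variable $h$, the kernel identity \eqref{13-01-20}, and the push-forward formula $\int (F\circ\Phi)\,\mathrm{d}\mu = \int F\,\mathrm{d}(\mu\circ\Phi^{-1})$ with $F = K((g,h),\cdot)\mathbf{1}_B$, to get
\begin{align*}
\mu\circ\Phi^{-1}\bigl((g,h)^{-1}B\bigr) &= \mu\bigl(h^{-1}\Phi^{-1}(B)\bigr) = \int_{\Phi^{-1}(B)} K(h,\xi)\,\mathrm{d}\mu(\xi) \\
&= \int_{\Phi^{-1}(B)} K\bigl((g,h),\Phi(\xi)\bigr)\,\mathrm{d}\mu(\xi) = \int_B K\bigl((g,h),\omega\bigr)\,\mathrm{d}(\mu\circ\Phi^{-1})(\omega),
\end{align*}
which is exactly \eqref{12-01-20a} for $\mu\circ\Phi^{-1}$. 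This gives the first bullet.

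The remaining three properties are soft consequences of $\Phi$ being a continuous equivariant injection. For ergodicity, if $A\subseteq\partial_M\Gamma_2$ is $\Gamma_2$-invariant then taking $g=e$ in \reflemma{13-01-20b} shows that $h\xi\in\Phi^{-1}(A)\iff\xi\in\Phi^{-1}(A)$, so $\Phi^{-1}(A)$ is $\Gamma_1$-invariant; $\Gamma_1$-ergodicity of $\mu$ then forces $(\mu\circ\Phi^{-1})(A) = \mu(\Phi^{-1}(A))\in\{0,1\}$. For non-atomicity, injectivity of $\Phi$ gives $\Phi^{-1}(\{\omega\}) = \{\xi\}$ when $\omega = \Phi(\xi)$ and $\Phi^{-1}(\{\omega\}) = \emptyset$ otherwise, so every point of $\partial_M\Gamma_2$ has $\mu\circ\Phi^{-1}$-measure $0$ as soon as $\mu$ is non-atomic. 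Finally, since a continuous injection of the compact metric space $\overline{\partial_m\Gamma_1}$ into $\overline{\partial_m\Gamma_2}$ is a homeomorphism onto its compact, hence Borel, image, the set $\Phi(C)$ is Borel for every Borel $C\subseteq\overline{\partial_m\Gamma_1}$ and $\mu(C) = (\mu\circ\Phi^{-1})(\Phi(C))$; as $\mu$ is concentrated on $\overline{\partial_m\Gamma_1}$, this recovers $\mu$ from $\mu\circ\Phi^{-1}$ and yields injectivity of the assignment $\mu\mapsto\mu\circ\Phi^{-1}$.

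I expect the conformality computation to be the only delicate step: one must orient the set identity $\Phi^{-1}((g,h)^{-1}B) = h^{-1}\Phi^{-1}(B)$ correctly and then chain it cleanly with the conformality of $\mu$, the kernel identity \eqref{13-01-20}, and the change of variables, all the while remembering that $\mu\circ\Phi^{-1}$ is supported on the proper Borel subset $\Phi(\overline{\partial_m\Gamma_1})$ of $\partial_M\Gamma_2$. Once the equivariance of \reflemma{13-01-20b} and the injectivity of $\Phi$ are in hand, the ergodicity, non-atomicity and injectivity statements are entirely routine.
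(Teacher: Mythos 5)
Your proof is correct and follows essentially the same route as the paper: the same chain of equalities (the set identity from Lemma \ref{13-01-20b}, conformality of $\mu$, the kernel identity \eqref{13-01-20}, and the change of variables) establishes $K$-conformality, and the remaining items are handled by equivariance and injectivity of $\Phi$ exactly as in the paper. Your extra care in justifying the injectivity of $\mu\mapsto\mu\circ\Phi^{-1}$ via the homeomorphism of $\overline{\partial_m\Gamma_1}$ onto its compact image only makes explicit what the paper leaves to the reader.
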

\begin{proof} For the first item note that $\mu \circ \Phi^{-1}$ is concentrated on  $\overline{\partial_m \Gamma_2} \backslash \partial_m\Gamma_2$ by Lemma \ref{13-01-20c}. Let $B$ be a Borel subset of $\partial_M\Gamma_2$. We find that
\begin{align*}
& \mu \circ \Phi^{-1}\left((g,h)^{-1}B\right) \ = \ \mu \left(h^{-1}\Phi^{-1}(B)\right) \ \ \ \ \ \ \ \text{(by Lemma \ref{13-01-20b})}\\
& = \ \int_{\Phi^{-1}(B)} K(h,\xi) \ \mathrm{d} \mu(\xi) \\
& = \ \int_{\partial_M \Gamma_1} 1_B \circ \Phi(\xi) K((g,h),\Phi(\xi)) \ \mathrm{d}\mu(\xi) \\
& = \ \int_B K((g,h),\eta) \ \mathrm{d} \mu \circ \Phi^{-1}(\eta)  \ ,
\end{align*}
showing that $\mu \circ \Phi^{-1}$ is $K$-conformal. For the second item assume that $\mu$ is $\Gamma_1$-ergodic and let $B \subseteq \partial_M \Gamma_2$ be a $\Gamma_2$-invariant Borel set. It follows from Lemma \ref{13-01-20b} that $\Phi^{-1}(B)$ is $\Gamma_1$-invariant and hence that $\mu \circ \Phi^{-1}(B) \in \{0,1\}$. The third and fourth items follow from the injectivity of $\Phi$.
\end{proof}  

It follows from Lemma \ref{13-01-20d} that if there are $n$ $K$-conformal and ergodic measures concentrated on $\overline{\partial_m \Gamma_1}$, there will be at least $n+1$ $K$-conformal and ergodic measures concentrated on $\overline{\partial_m \Gamma_2}$ since the harmonic measure of $(\Gamma_2,\mu_2)$ will contribute an ergodic $K$-conformal measure singular to those coming from $(\Gamma_1,\mu_1)$. If we assume that there is a spine $\xi_1$ in $\overline{\partial_m \Gamma_1}$ the element $\Phi(\xi_1)$ will be a spine in $\overline{\partial_m \Gamma_2} \backslash \partial_m \Gamma_2$ and the harmonic measure of $(\Gamma_2,\mu_2)$ will be non-atomic by the result of Kaimanovich. Under the same assumption we can repeat the construction, and we obtain in this way the following.

%\begin{lemma}\label{13-01-20f} Assume that $\partial_M\Gamma_0$ contains a spine in $\overline{\partial_m \Gamma_0}$. It follows that $\partial_M \Gamma$ contains a spine in $\overline{\partial_m \Gamma} \backslash \partial_m \Gamma$.
%\end{lemma}
%\begin{proof} This follows immediately from Lemma \ref{12-01-20e} and Lemma \ref{13-01-20c}.
%\end{proof}

%It follows from Lemma \ref{13-01-20d} that the ergodic $e^{D^{\mu_0}}$-conformal measures give rise to distinct ergodic $e^{D^{\mu}}$-conformal measures that are all singular with respect to the harmonic measure on $\partial_m \Gamma$. This procedure can be iterated, leading for example to the following.

\begin{prop}\label{13-01-20e} There is a finitely supported random walk $(\mathbb D^n,\mu)$ on $\mathbb D^n$ such that the number of ergodic $K$-conformal measures on $\partial_M \mathbb D^n$ is at least $n+1$; and at least $n$ of these are non-atomic.
\end{prop}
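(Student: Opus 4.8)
The plan is to prove by induction on $n$ a statement slightly stronger than the one asserted: for each $n \geq 1$ there is a finitely supported random walk $(\mathbb D^n,\mu)$ for which (a) $\overline{\partial_m \mathbb D^n}\backslash \partial_m \mathbb D^n$ contains a spine, (b) there are at least $n+1$ ergodic $K$-conformal measures on $\partial_M \mathbb D^n$, and (c) at least $n$ of these are non-atomic. The strengthening by (a) is essential: it is what permits the product construction $\Gamma_2 = \Gamma_0 \times \Gamma_1$ to be iterated and what forces the harmonic measure of each successive walk to be non-atomic, so it must be carried along even though the final statement does not mention it.

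For the base case $n=1$ I would take the walk $(\mathbb D,\mu_0)$ of \refprop{12-01-20d}, which supplies a spine $\xi_0 \in \overline{\partial_m \mathbb D}\backslash \partial_m \mathbb D$, giving (a). Since $\xi_0$ is fixed by $\Gamma$ by \reflemma{12-01-20b}, the Dirac measure $\delta_{\xi_0}$ is an ergodic $K$-conformal measure, and the harmonic measure ${}_e\nu$ is another by \refcor{25-04-19}. These are mutually singular: ${}_e\nu$ is concentrated on $\partial_m \mathbb D$ by \refthm{07-01-20xx} while $\delta_{\xi_0}$ sits on $\xi_0 \notin \partial_m \mathbb D$. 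Moreover ${}_e\nu$ is non-atomic, because $\xi_0 \notin \partial_m \mathbb D$ means, by \reflemma{23-05-19ax} together with uniqueness of the spine, that the Poisson boundary is non-trivial, so Kaimanovich's dichotomy excludes atoms. This yields (b) and (c) for $n=1$.

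For the inductive step I would apply the construction preceding the proposition with $\Gamma_0 = \mathbb D$ carrying $\mu_0$ and $\Gamma_1 = \mathbb D^n$ carrying the walk furnished by the induction hypothesis, so that $\Gamma_2 = \mathbb D^{n+1}$, the measure $\mu_2$ is again finitely supported, and the spine hypothesis on $\Gamma_0$ required by \reflemma{12-01-20e}--\reflemma{13-01-20d} is met. Letting $\xi_1$ be the spine in $\overline{\partial_m \Gamma_1}\backslash \partial_m\Gamma_1$, the point $\Phi(\xi_1)$ is a spine by \eqref{13-01-20} and lies in $\overline{\partial_m \Gamma_2}\backslash\partial_m\Gamma_2$ by \reflemma{13-01-20c}, reproducing (a) for $n+1$. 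By \reflemma{13-01-20d} the push-forwards under $\Phi$ of the $n+1$ ergodic $K$-conformal measures given by the hypothesis are again ergodic and $K$-conformal, pairwise distinct by injectivity of $\mu \mapsto \mu\circ\Phi^{-1}$, non-atomic whenever the originals are, and all concentrated on $\overline{\partial_m \Gamma_2}\backslash \partial_m\Gamma_2$. The harmonic measure ${}_e\nu$ of $(\Gamma_2,\mu_2)$ is one further ergodic $K$-conformal measure; it is concentrated on $\partial_m\Gamma_2$ by \refthm{07-01-20xx} and hence mutually singular with every push-forward, and it is non-atomic by the same spine-versus-Poisson-boundary argument as in the base case, now applied to $\Phi(\xi_1)$. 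Counting gives at least $n+2$ ergodic $K$-conformal measures, of which at least $n+1$ are non-atomic, completing the induction.

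The routine verifications — that $\delta_{\xi_0}$ is genuinely $K$-conformal and ergodic, and that mutual singularity forces distinctness — are immediate from the cited results. The only real subtlety, and the point I would watch most carefully, is the bookkeeping that keeps condition (a) alive: at each stage one must confirm that the spine produced by $\Phi$ lands outside $\partial_m \Gamma_2$, since it is precisely the non-minimality of this spine that makes the Poisson boundary non-trivial and the harmonic measure non-atomic, thereby supplying the extra non-atomic measure at every step.
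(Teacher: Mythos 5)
Your proposal is correct and follows essentially the same route as the paper, which derives the proposition from the paragraph preceding it: iterate the product construction with $\Gamma_0=\mathbb D$ carrying $\mu_0$, push the existing ergodic $K$-conformal measures forward via $\Phi$ into $\overline{\partial_m\Gamma_2}\setminus\partial_m\Gamma_2$, and add the harmonic measure of the product walk, which is singular to these and non-atomic because the (unique) spine $\Phi(\xi_1)$ lies outside $\partial_m\Gamma_2$. Your explicit induction, with the spine location carried along as an extra hypothesis, is just a careful write-up of exactly that argument.
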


By construction the random walk $(\mathbb D^n,\mu)$ has a spine in its Martin boundary. It follows that the KMS-spectrum of $\alpha^{\mu}$ is the whole line $\mathbb R$. For $\beta \notin \{0,1\}$ the simplex of $\beta$-KMS states is affinely homeomorphic to the tracial state of $C^*(\mathbb D^n)$. For $\beta = 1$ the tracial states of $C^*(\mathbb D^n)$ constitutes only a closed face of the simplex of $1$-KMS states; corresponding to those whose supporting $K$-conformal measure is concentrated on the spine.

\end{document}